\documentclass[11pt]{article}
\usepackage{amsmath}
\usepackage{amsfonts}
\usepackage{color}
\usepackage{latexsym}
\usepackage{tablefootnote}
\usepackage{epsfig}
\usepackage{multirow}
\usepackage{float}
\usepackage{array}
\usepackage{amssymb,amsthm}
\usepackage{hyperref}    % Hyperlink inside the paper 
\usepackage{algorithm2e} % For algorithms
\usepackage{cleveref}    % Automatic citation of environnements

\newcommand*{\Q}{\mathcal{Q}}
\newcommand*{\V}{\mathcal{V}}

\newcommand*{\E}{\mathbb{E}}

\definecolor{antiquefuchsia}{rgb}{0.57, 0.36, 0.51}
\definecolor{MyViolet}{rgb}{0.45,0.08,0.95}
\definecolor{MyBrown}{rgb}{0.45,0.08,0}
\definecolor{MyDarkBlue}{rgb}{0,0.08,0.45}
\usepackage{makecell,booktabs}
\usepackage[version=4]{mhchem}   
\usepackage[strict]{changepage}

\usepackage{amsmath}
\usepackage{amsfonts}
\usepackage{latexsym}
\usepackage{amssymb}

\newtheorem{thm}{Theorem}[section]
\newtheorem{theorem}[thm]{Theorem}

\newtheorem{lemma}[thm]{Lemma}

\newtheorem{definition}[thm]{Definition}

\newtheorem{rem}[thm]{Remark}
\newtheorem{remark}[thm]{Remark}

\newcommand{\beq}{\begin{equation}}
\newcommand{\eeq}{\end{equation}}
\newcommand{\beqa}{\begin{eqnarray}}
\newcommand{\eeqa}{\end{eqnarray}}
\newcommand{\beqas}{\begin{eqnarray*}}
\newcommand{\eeqas}{\end{eqnarray*}}
\newcommand{\bi}{\begin{itemize}}
\newcommand{\ei}{\end{itemize}}

\newcommand{\R}{\mathbb{R}}

\def\iff{\Leftrightarrow}

\begin{document}

	\title{Bidirectional SDDP with dimension-free complexity 
    for solving strongly convex stochastic dynamic programming equations}

	 \maketitle

     \begin{tabular}{cc}
\begin{tabular}{c}
Pablo Barros\\
School of Applied Mathematics, FGV\\
Praia de Botafogo, Rio de Janeiro, Brazil\\
{\tt pabloacbarros@gmail.com}
\end{tabular}
&
\begin{tabular}{c}
Vincent Guigues\\
School of Applied Mathematics, FGV\\
Praia de Botafogo, Rio de Janeiro, Brazil\\
{\tt vincent.guigues@fgv.br}
\end{tabular}\\
\end{tabular}
  
     \begin{abstract}
We analyze the complexity of Bidirectional Stochastic Dual Dynamic Programming (BSDDP) algorithm applied to multistage stochastic optimization problems with strongly convex cost functions. Under standard regularity assumptions and without discount factor, we establish an explicit complexity bound on the expected number of iterations needed to obtain an $\bar{\varepsilon}$-optimal policy. Our bound depends on the strong convexity constants and Lipschitz constants of the stagewise cost functions, but is independent of the dimensionality of the state space. This improves upon the complexity of SDDP for convex problems from \cite{lan2020}, particularly when the number of stages is moderate and the state dimension is large. The analysis further reveals that the complexity decreases as the strong convexity of the cost functions increases, highlighting the algorithm’s efficiency in structured problem instances.
\end{abstract}
 
		{\bf Keywords:} strongly convex  optimization, iteration-complexity, Dynamic Programming, SDDP.\\
		
		% 	{\bf Mathematics Subject Classification (2010)} 
		% 	49M37 $\cdot$ 65K05 $\cdot$ 68Q25 $\cdot$ 90C25 $\cdot$ 90C30 $\cdot$ 90C60
		{\bf AMS subject classifications:} 90C15, 90C90.\\ 
        
\section{Introduction}

In this paper, we introduce the \textit{Bidirectional Stochastic Dual Dynamic Programming (BSDDP)} algorithm applied to Multistage Stochastic Optimization Problems (MSOPs) with strongly convex cost functions and we analyze the complexity of this algorithm. Our setting involves $T$ stages and no discount factor.
BSDDP is a variant of SDDP that computes trial points in forward passes, same as SDDP, but that delays the computation of the cuts
to the next iteration where the same scenario is visited.
Therefore, we look backward in the history of scenarios to compute the cuts.
However, we also use a forward information on the set of sampled scenarios since both $k$th and $(k+1)$th sampled scenarios
are used at iteration $k$
whereas only $k$th scenario is used
in iteration $k$ of SDDP.
The name Bidirectional of our BSDDP method comes from the use of this forward-backward information on scenarios (to distinguish from the forward and backward passes which are still present as in SDDP). The motivation of BSDDP comes from our analysis of the complexity of SDDP and of our extension of
the complexity analysis of \cite{fddpguibarrren26} to the stochastic setting. 

SDDP method was coined in \cite{pereira} as a sampling-based extension
of the Nested Decomposition method from \cite{birgemulti}. Since then, many enhancements
of this method, which is still  the most popular to solve MSOPs, have been proposed, for instance extensions to  risk-averse problems
\cite{philpmatos}, \cite{guiguesrom10}, convergence proof
for risk-neutral \cite{lecphilgirar12}  and risk-averse \cite{guiguessiopt2016} convex programs, inexact variants  \cite{guigues2016isddp}
\cite{gms2020isddp}, variants with a random number of stages 
\cite{gui21r}, or variants for nonconvex problems  such as
\cite{midas}, \cite{ceri12}, see also \cite{shadenrbook, shapsddp, sddprebeneck} 
for a review.

Many papers develop complexity bounds for static or two-stage stochastic optimization problems such as \cite{nemjudlannem09}, \cite{singlestoone} but we are only aware of \cite{lan2020}
for the study of the complexity of SDDP applied to
MSOPs. In the derministic case,
a dimension-free complexity bound was derived
in \cite{fddpguibarrren26} for a framework
that contains Dual Dynamic Programming as a special
case. In this paper, we extend the tools
developed in \cite{fddpguibarrren26} 
to the stochastic setting. This includes
the introduction of several new key quantities ($\tilde V_t^k$
and 
$\tilde Q_t^k$
in \eqref{Vtildetk},
$e_{t}^k$ and 
$\tilde e_t^k$ in
\eqref{etkdef},
$\Delta_t^{k,\ell}$ in
\eqref{defdeltatk}, and event
$E^k$ in \eqref{defEventEk}), the
use of the pigeonhole principle, the derivation of
several new key inequalities, and of the use of Markov chains.

We introduce BSDDP and establish an iteration complexity bound for BSDDP which improves 
the complexity from \cite{lan2020} of SDDP applied to convex, not necessarily strongly convex, problems. Notably, our bound does not depend on the dimension of the state space, unlike the convex (non-strongly convex) case analyzed in \cite{lan2020}.

More precisely, in \cite{lan2020}, the expectation $\mathbb{E}[K]$  of the number $K$  of iterations required by SDDP to produce an $\bar{\varepsilon}$-optimal solution  for a problem with $T$ stages and without discount factor
satisfies
\begin{equation}\label{boundlan}
\mathbb{E}[K] \le \bar{K} \cdot \bar{N} + 2,
\end{equation}
where \(\bar{K} := \sum_{t=1}^{T-1} \left( \frac{D_t}{\delta_t} + 1 \right)^{n_t}\) and \(\bar{N} := \max\{N_1, \ldots, N_T\}.\)  
In this expression, $n_t$ is the size of the state vector for stage $t$, $N_t$ is the number of possible realizations for the random vector $\xi_t$ for stage $t$, and 
$D_t$ is the diameter of some set $X_t$ containing
the decisions for stage $t$,

In contrast, we show that under strong convexity, the expectation $\mathbb{E}[K]$  of the number $K$  of iterations required by BSDDP to produce an $\bar{\varepsilon}$-optimal solution  for a problem with $T$ stages and without discount factor
satisfies
\begin{equation}\label{ourbound}
    \mathbb{E}[K] \le \sum_{i=1}^R \frac{1}{p^i}, \quad R = 
1 + \left( \prod_{t=2}^T N_t \right) \cdot 
\left\lceil\left(
        1+ \left[ \frac{40}{\bar \varepsilon} \sum_{t=1}^{T-1} \frac{M_t^2}{\mu_t} \right]^{2^{T-1}-1}
    \right) \log\left( \frac{4}{\bar \varepsilon} \sum_{t=1}^{T-1} M_t D_t \right) \right \rceil
\end{equation}
where $M_t$ and $\mu_t$ denote the Lipschitz and strong convexity constants of the stage-$t$ cost functions and $p := \prod_{t=1}^{T-1} \min_{\tilde \xi_t \in \Xi_t} \mathbb{P}(\boldsymbol \xi_t= \tilde \xi_t)$
with $\Xi_t$ the support of $\xi_t$.

Our result highlights two key improvements: 
\begin{itemize}
    \item[(i)] the complexity bound is independent of the state dimensions $n_t$, and 
    \item[(ii)] it decreases as the strong convexity constants $\mu_t$ increase.
\end{itemize}
Hence, for problems with a large value of $n$ or when strong convexity is present, our bound \eqref{ourbound} is sharper than \eqref{boundlan}.

The outline of the paper is as follows: in Section \ref{secpbform}, we state the problem we want to solve and its assumptions.
In Section \ref{secsddp}, we state the BSDDP algorithm while
our complexity analysis of BSDDP applied to strongly convex problems  is given in Section \ref{secstronglyconvex}.

Throughout the paper, $\R^n$ denotes the standard $n$-dimensional Euclidean space equipped with  inner product 
$\langle x,y\rangle=x^T y$
with corresponding induced  norm denoted by 	$\|\cdot\|:=\|\cdot\|_2$.

\section{Problem formulation}\label{secpbform}

We consider a multistage stochastic optimization problem over $T$ stages. At each stage $t$, decisions are made based on the current realization of uncertainty and past decisions, with the goal of minimizing the expected total cost. We assume that the first-stage cost function is strongly convex and Lipschitz continuous, and that subsequent stagewise cost functions satisfy similar structural properties under each realization of uncertainty.

\subsection{A functional class}

To formalize the structure of the stagewise cost functions, we introduce a class of functions that encapsulates the regularity conditions required throughout the analysis.

\begin{definition}\label{defclassF}
    For $M, \mu>0$,
    let ${\cal F}(M,\mu)$ denote the class of
    functions $G:\mathbb{R}^m \times \mathbb{R}^n \to \mathbb{R}$ satisfying:
\begin{itemize}
    \item[i)] $G(\cdot,\cdot)$ is convex;
    \item[ii)] for every $y$,  $G(y,\cdot)$ is $M$-Lipschitz continuous, i.e.,
\[
G(y,x) - G(y,x') \le M\|x-x'\|, \quad \forall x, x'\in \mathbb{R}^{n};
\]
\item[iii)]
for every $x \in \mathbb{R}^n$, function $G(\cdot,x)$ is $\mu$-strongly convex;
    \item[iv)] for every $(y,x) \in \mathbb{R}^m \times \mathbb{R}^n$, a subgradient oracle $G'_x:\mathbb{R}^m \times \mathbb{R}^n \to \mathbb{R}^{n}$ satisfying
    \[
    G_x'(y,x) \in \partial_x G(y,x), \quad \|G'_x(y,x)\| \le M
    \]
    is available.
\end{itemize}
\end{definition}

This class captures the core assumptions needed to ensure well-posedness of the dynamic programming recursion and to derive complexity bounds in the presence of strong convexity.

We will denote by $\overline{\text{Conv}}(Y)$ the set of convex, proper, lower semicontinuous functions defined on
convex set $Y$.

\if{
\begin{lemma}\label{subgradValueFunc}
    Assume that
$G \in {\cal F}(M,\mu)$ and $ H \in \overline{\emph{Conv}}(Y)$. Consider the following optimization problem parameterized by $x \in X$:
\begin{equation}\label{pbdefsc}
            V(x) = \min_{y\in Y}\left\{G(y,x) + H(y) \right\}.
\end{equation}
        Then, the following statements hold:
\begin{itemize}
            \item[a)] problem \eqref{pbdefsc} has a unique optimal solution $y(x) \in Y$ and
        its optimal value
 $V(x)$
        is an $M$-Lipschitz continuous convex function in $X$;
        \item[b)]
        the
function $V': X \to \mathbb{R}^{d_x}$ defined as
\[
V'(x) := G'_x(y(x),x) 
\]
satisfies
\[
V'(x) \in  \partial V(x), \quad \|V'(x) \| \le M \quad \forall x \in X,
\]
where $y(x)$ is the optimal solution for \eqref{pbdefsc}.

\end{itemize}
\end{lemma}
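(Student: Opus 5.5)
The plan is to prove a) directly and, for b), to prove the inequality $V(x')\ge V(x)+\langle G'_x(y(x),x),x'-x\rangle$ through a joint subgradient of $G$ at $(y(x),x)$. For a), fix $x\in X$. The function $y\mapsto G(y,x)+H(y)$ is the sum of a $\mu$-strongly convex function, by iii) of Definition~\ref{defclassF}, and a proper lsc convex function. It is therefore proper, lsc and $\mu$-strongly convex on $Y$, hence coercive, and it has a unique minimizer $y(x)$. Convexity of $V$ comes from partial minimization of the jointly convex function $(y,x)\mapsto G(y,x)+H(y)+\iota_Y(y)$. For the Lipschitz property, take $x,x'$ and use $y(x')$ as a feasible point:
\[
V(x)\le G(y(x'),x)+H(y(x'))\le V(x')+M\|x-x'\|
\]
by ii). Swapping the roles of $x$ and $x'$ gives $|V(x)-V(x')|\le M\|x-x'\|$.

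For b), the bound $\|V'(x)\|\le M$ is immediate from iv). The substantive claim is $V'(x)\in\partial V(x)$. I would set $\bar y=y(x)$ and use optimality: $0\in\partial_y G(\bar y,x)+\partial(H+\iota_Y)(\bar y)$. The aim is a vector $u\in -\partial(H+\iota_Y)(\bar y)$ with $(u,G'_x(\bar y,x))\in\partial G(\bar y,x)$, a \emph{joint} subgradient. Given such $u$, for any $x'$ and $y\in Y$ one has
\[
G(y,x')+H(y)\ge G(\bar y,x)+\langle u,y-\bar y\rangle+\langle G'_x(\bar y,x),x'-x\rangle+H(\bar y)-\langle u,y-\bar y\rangle .
\]
Minimizing over $y$ then gives $V(x')\ge V(x)+\langle G'_x(\bar y,x),x'-x\rangle$, which is the claim.

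The main obstacle is producing this $u$, and I expect it to fail for an arbitrary oracle selection. A partial subgradient in $x$ need not be the $x$-block of any joint subgradient. For instance, take $n=m=1$, $Y=\R$, $H\equiv 0$ and $G(y,x)=|x-y|+\tfrac12 y^2$, which lies in ${\cal F}(1,1)$. Then $V$ is the Huber function, so $\partial V(0)=\{0\}$. However $\partial_x G(0,0)=[-1,1]$, so an admissible oracle may return $1\notin\partial V(0)$. Thus, as worded, b) cannot be proved for every oracle satisfying iv).

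What I would actually prove is b) under the hypothesis that makes the step work. One option is that $G(\cdot,\cdot)$ is differentiable in $x$, so that $\partial_x G(\bar y,x)$ is a singleton; joint convexity then forces the $x$-block of every joint subgradient to equal it, and a $u$ exists by the optimality condition together with the sum rule. The other option is that the oracle returns the $x$-component of a joint subgradient whose $y$-component $u$ satisfies $-u\in\partial(H+\iota_Y)(\bar y)$. In either case the displayed chain completes the proof.
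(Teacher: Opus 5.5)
Your part a) follows the paper's proof. Uniqueness comes from strong convexity in $y$. Convexity of $V$ comes from partial minimization of a jointly convex function (the paper cites Bertsekas, Prop.~3.3.3). The Lipschitz bound comes from inserting $y(x')$ into the problem at $x$ and swapping roles. Existence of the minimizer tacitly uses that $H+\iota_Y$ is lsc on all of $\mathbb{R}^m$ (e.g.\ $Y$ closed); the paper is equally silent on this.

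For b), your diagnosis is correct, and the gap lies in the paper's argument, not yours. The paper says b) ``immediately follows'' from Theorem~24(a) of \cite{rockafellar1974conjugate} together with item iv) of Definition~\ref{defclassF}. As the paper's own appendix uses that theorem, it characterizes $\partial V(x)$ as the set of $s$ with $(0,s)\in\partial(G+H+\iota_Y)(y(x),x)$. That is exactly the joint-subgradient condition you isolated. Item iv) only gives $G'_x(y(x),x)\in\partial_x G(y(x),x)$, which is strictly weaker.

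Your example is valid:
\begin{itemize}
\item $G(y,x)=|x-y|+\tfrac12 y^2$ lies in $\mathcal F(1,1)$, and $y(0)=0$;
\item $V$ is the Huber function, so $\partial V(0)=\{0\}$;
\item $\partial G(0,0)=\{(-t,t): t\in[-1,1]\}$, so the only joint subgradient with $x$-block equal to the admissible oracle value $1$ is $(-1,1)$, whose $y$-block is not $0$.
\end{itemize}
The authors evidently reached the same conclusion. This lemma is disabled in the source. Its replacements, Lemmas~\ref{subgradValueFunc1} and~\ref{subgradValueFunc2}, assert only that $s\in\partial V(x)$ implies $s\in\partial_x G(y_x,x)$. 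BSDDP then takes its cut slopes as Lagrange multipliers of $z_t=x_t$, which is essentially your second repair.

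One step in your first repair should be spelled out. The sum rule yields $u\in\partial_y G(\bar y,x)$ with $-u\in\partial(H+\iota_Y)(\bar y)$, but your displayed chain needs $u$ to be the $y$-block of a \emph{joint} subgradient. This holds because, for a finite convex $G$, the projection of $\partial G(\bar y,x)$ onto the $y$-coordinates equals $\partial_y G(\bar y,x)$. Both are compact convex sets with support function $d\mapsto G'((\bar y,x);(d,0))$. The matching $x$-block then lies in $\partial_x G(\bar y,x)=\{\nabla_x G(\bar y,x)\}$, and your displayed chain finishes the argument.
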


\begin{proof} a) The first claim follows form the assumption that $G \in {\cal F}(M,\mu)$ and Definition \ref{defclassF} (iii).
It follows from Proposition 3.3.3 of \cite{bertsekas2009convex} that $V$ is a convex function.
We now show that $V$ is
$M$-Lipschitz continuous.
Let $y$ and $y'$ denote $y(x)$ and $y(x')$, respectively. Then, we have
\[
V(x) = G(y,x) + H(y), \quad V(x') = G(y',x') + H(y').
\]
Using the above identities, the minimality of $y$ and $y'$, and the fact that $G \in {\cal F}(M,\mu)$, we have
    \[
        V(x)-V(x') \le G(y',x) + H(y') - [G(y',x') + H(y')] = G(y',x) - G(y',x') \le M\|x-x'\|
        % V(x')-V(x) &\ge G(y',x') + H(y') - [G(y',x) + H(y')] = G(y',x') - G(y',x) \ge -M\|x-x'\|,
    \]
    and hence that $V$ is $M$-Lipschitz continuous.
    
    \par b) This statement immediately follows from Theorem 24(a) of \cite{rockafellar1974conjugate} and Definition \ref{defclassF} (iv).
\end{proof}

}\fi

Throughout this paper,
$T$ denotes the number of stages for the multistage stochastic problem stated below.
In addition to a compact convex set $X_1$ and a strongly convex function $F_1:X_1 \to \mathbb{R}$ that is
$M_1$-Lipschitz continuous on $X_1$, it is also
assumed that,
for each $t=2,\ldots,T$,
we have the following:
\begin{itemize}
     \item[(A1)]
     $X_t$ is a nonempty compact convex set;
    \item[(A2)] 
    $\xi_t$ is a random variable with finite support $\Xi_t$, of size $N_t$, and distribution given by  $\mathbb{P}_t.$
    \item[(A3)]
    for some $M_{t-1},\mu_t>0$, $F_t(\cdot,\cdot \, ;\xi_t) \in {\cal F}(M_{t-1},\mu_t)$ for every $\xi_t\in \Xi_t$.
\end{itemize}

Consider the following problem
\begin{equation}
\label{pb:multistage0-sto}
\mathcal{Q}_0^*=\left\{
\begin{array}{l}
    \min \quad  \mathbb{E} \left[F_1(x_1)+F_2(x_2(\xi_{[2]}),x_1;\xi_2) + \ldots + F_T(x_T(\xi_{[T]}),x_{T-1}(\xi_{[T-1]});\xi_T) \right],\\
    \text{s.t.} \quad x_1 \in X_1,\,
    x_2(\xi_{[2]}) \in X_2,\ \ldots ,\
    x_T(\xi_{[T]}) \in X_T, \quad \forall \,
    \xi_{[T]} \in \Xi_1 \times \ldots \times \Xi_T,
\end{array}
\right.
\end{equation}
where
$\xi_{[t]}=(\xi_1,\xi_2,\ldots,\xi_t)$.

This problem is also equivalent to the following Dynamic
Programming equations: 
\begin{equation}
\mathcal{Q}_0^* =
\displaystyle \min_{u_1 \in X_1} F_1(u_1)  + \mathcal Q_1 ( u_1 )
\end{equation}
where $\mathcal Q_1$ is obtained 
by a backward recursion as
follows:
set $\mathcal{Q}_T \equiv 0$
and from $t=T$ to $t=2$, set
\begin{equation}\label{secondstodp2-sto} 
 \mathfrak{Q}_{t-1}(\cdot \, ;\xi_t) = 
\displaystyle \min_{u_t \in X_t}  F_t (u_t,\cdot \, ;\xi_t) + \mathcal{Q}_{t}(u_t)
\end{equation}
and then define
\begin{equation}\label{def:Q-exp-stp}
    \mathcal{Q}_{t-1} (\cdot)= \mathbb{E}_{\xi_t}[ \mathfrak{Q}_{t-1}(\cdot \, ;\xi_t)].
\end{equation}

\section{BSDDP for $T$-stage strongly convex problems}\label{secsddp}

We now present the Bidirectional Stochastic Dual Dynamic Programming (BSDDP) algorithm tailored to solve the multistage stochastic problems with strongly convex costs presented in the previous section. The algorithm iteratively builds lower approximations of the true cost-to-go functions \( \mathcal{Q}_t \) using a collection of affine functions (cuts). These approximations are denoted by \( \Gamma_t^k \), and give rise to stagewise value functions \( V_{t-1}^k \), which in turn define the expected approximations \( \mathcal{V}_t^k \approx \mathcal{Q}_t \).

The method alternates between:
\begin{itemize}
    \item a forward pass, where a sample path is drawn and decisions are computed by minimizing stagewise models that use the current approximations;
    \item a backward pass, where cuts are added to refine the approximations based on subgradients of the sampled value functions;
    \item an update step, which introduces averaging to stabilize convergence and define an approximate first stage solution.
\end{itemize}

The definitions of \( \mathcal{V}_t^k \) and \( \Gamma_t^k \) will be central to the complexity analysis.
More precisely, 
\begin{equation}\label{defVtk}
\mathcal{V}_{t}^k (\cdot)= \mathbb{E}_{\xi_{t+1}}[V_{t}^k(\cdot\, ;\xi_{t+1})]
\end{equation}
where 
\begin{equation}\label{defVtk2}
V_{t}^k(x_{t}; \xi_{t+1}) := \min_{u_{t+1} \in X_{t+1}} F_{t+1}(u_{t+1}, x_{t}; \xi_{t+1}) + \Gamma_{t+1}^k(u_{t+1})
\end{equation}
for every $x_t \in X_t$.

BSDDP builds linearizations
of functions $\V_t^k$ for
$\Q_t$ at trial points computed in a forward pass.
These linearizations are built from linearizations of functions $V_{t}^k(\cdot; \xi_{t+1})$
at these trial points.
To compute  a linearization for these value functions 
$V_{t}^k(\cdot; \xi_{t+1})$,
we use Lemma 
\ref{subgradValueFunc1}-a),c) from the Appendix.
From this lemma, subgradients of  $V_{t}^k(\cdot; \xi_{t+1})$
at $x_t$ (used to build the linearizations)  are optimal Lagrange multipliers $\lambda_t$ for the constraint
$z_t=x_t$ in the reformulation
\begin{equation}\label{defVtk3}
V_{t}^k(x_{t}; \xi_{t+1})
=
\begin{aligned}
\min_{u_{t+1},\, z_t}\quad 
& F_{t+1}(u_{t+1}, z_t; \xi_{t+1}) + \Gamma_{t+1}^k(u_{t+1})\\
\text{s.t.}\quad 
& u_{t+1} \in X_{t+1}, z_t = x_t \;\;[\lambda_t].
\end{aligned}
\end{equation}
of problem \eqref{defVtk2}.
We will use for the linearizations of $\V_t^k$ at a point $x_t$ the notation
\begin{equation}
\ell_{\mathcal{V}_{t}^k}\left(\cdot \,; x_{t}\right) := 
\mathbb{E}_{\xi_{t+1}} \left[ V_{t}^k\left(x_{t}; \xi_{t+1}\right) 
+ \left\langle s_{t}^{k-1}(\xi_{t+1}), \cdot - x_{t} \right\rangle \right]
\end{equation}
where
\begin{equation}\label{subgradvtkx}
s_{t}^{k-1}(\xi_{t+1}) \in \partial V_{t}^k(\cdot \,; \xi_{t+1})\left(x_t \right).
\end{equation}

\begin{rem} \label{remsubbded}
By Assumption (A3), convexity of
$\Gamma_{t+1}^k$ (which is piecewise affine), and Lemma
\ref{subgradValueFunc2} from the Appendix, we have that $s_{t}^{k-1}(\xi_{t+1})$
in \eqref{subgradvtkx}
satisfies 
$\|s_{t}^{k-1}(\xi_{t+1})\| \leq M_t$ and that $V_t^k(\cdot,\xi_{t+1})$ is
$M_t$-Lipschitz continuous.
\end{rem}

We now state BSDDP.\\

\vspace{10pt}

\noindent\rule[0.5ex]{1\columnwidth}{1pt}
\par {\textbf{Bidirectional Stochastic Dual Dynamic Programming (BSDDP).}}\\
\noindent\rule[0.5ex]{1\columnwidth}{1pt}
\par {\textbf{Input.}}
$\tau_0 \in (0,1)$, cuts 
$\Gamma^1_t$ convex $M_t$-Lipschitz
such that $\Gamma_t^1 \le \mathcal{Q}_t$ for all $t = 1,\dots,T$, dictionary $\mathcal D$. Define $k=1$ and $\ell(1) = 1$.
Sample scenario $\zeta^1 = \left(\xi_2^1,\ldots,\xi_T^1 \right)$ and set $\mathcal{D}(\zeta^1)=1$.

\bigskip

\par {\textbf{Step 1. Forward pass.}} Let $x_1^k$ be the unique solution of
\[
\mathcal{V}_0^k := \min_{u_1 \in X_1} F_1(u_1) + \Gamma_1^k(u_1).
\]
For $t = 2$ to $T$, recursively define $x_t^k$ as the solution of
\[
V_{t-1}^k(x_{t-1}^k; \xi_t^k) := \min_{u_t \in X_t} F_t(u_t, x_{t-1}^k; \xi_t^k) + \Gamma_t^k(u_t).
\]

\par {\textbf{Step 2. Backward pass.}} Sample scenario $\zeta^{k+1} := \left(\xi_2^{k+1}, \ldots, \xi_T^{k+1} \right)$. 

\noindent If $\zeta^{k+1} \notin \mathrm{dom}(\mathcal{D})$,
\par $\hspace*{0.5cm}$set $\mathcal{D}(\zeta^{k+1}) := k+1$,
\par $\hspace*{0.5cm}$$\ell(k+1) = k+1$,
\par $\hspace*{0.5cm}$$\Gamma_{t}^{k+1} := \Gamma_{t}^k$ for all $t=1,\ldots,T$,
\par $\hspace*{0.5cm}$go to Step 3.

\noindent Else 
\par $\hspace*{0.5cm}$set $\ell(k+1) := \mathcal{D}(\zeta^{k+1})$,
\par $\hspace*{0.5cm}$define $\Gamma_T^{k+1} \equiv 0$.
\par $\hspace*{0.5cm}$For $t = T$ down to $t = 2$, do:
\par $\hspace*{1cm}$For each $\omega_t \in \Xi_t$, solve
\[
V_{t-1}^{k+1} \left(x_{t-1}^{\ell(k+1)}; \omega_t\right) := \min_{u_t \in X_t} F_t\left(u_t, x_{t-1}^{\ell(k+1)}; \omega_t\right) + \Gamma_t^{k+1}(u_t),
\]
\par $\hspace*{1.5cm}$and compute a subgradient
\[
s_{t-1}^k(\omega_t) \in \partial V_{t-1}^{k+1}(\cdot \,; \omega_t)\left(x_{t-1}^{\ell(k+1)}\right).
\]
\par $\hspace*{1cm}$End For
\par $\hspace*{1cm}$Construct the expected cutting-plane:
\[
\ell_{\mathcal{V}_{t-1}^{k+1}}\left(\cdot \,; x_{t-1}^{\ell(k+1)}\right) := 
\mathbb{E}_{\xi_t} \left[ V_{t-1}^{k+1}\left(x_{t-1}^{\ell(k+1)}; \xi_t\right) 
+ \left\langle s_{t-1}^k(\xi_t), \cdot - x_{t-1}^{\ell(k+1)} \right\rangle \right].
\]
\par $\hspace*{1cm}$Update the cut:
\begin{equation}\label{defGamma}
    \Gamma_{t-1}^{k+1} := \max\left\{ \Gamma_{t-1}^k, \ell_{\mathcal V_{t-1}^{k+1}}\left(\cdot \,; x_{t-1}^{\ell(k+1)}\right) \right\}.
\end{equation}
\par $\hspace*{0.5cm}$End For
\par $\hspace*{0.5cm}$Set $\mathcal{D}(\zeta^{k+1}) := k+1$.\\
\noindent End If

\bigskip

\par {\textbf{Step 3. Update of first stage solution.}} Compute $y_1^k$ such that
\[
y_1^k = (1 - \tau_0) x_1^k + \tau_0 y_1^{\ell(k)}
\]
Set \( k \leftarrow k+1 \), and return to Step 1.

\noindent\rule[0.5ex]{1\columnwidth}{1pt}

\par Some remarks are now in order. The following ingredients of BSDDP are common with SDDP:
\begin{itemize}
\item The forward pass of iteration $k$ computes trial points $x_t^k$ which are used in the backward passes to build cuts at these points.
    \item The functions
     $\mathcal{V}_{t}^k (\cdot)= \mathbb{E}_{\xi_{t+1}}[V_{t}^k(\cdot\, ;\xi_{t+1})]$
    serve as expected approximations of the true value functions \( \mathcal{Q}_t \), built from sample trajectories and the current approximations \( \Gamma_t^k \). They reflect the algorithm's evolving view of the stagewise cost-to-go functions.

    \item The condition \( \Gamma_t^k \le \mathcal{Q}_t \) is enforced at each iteration, ensuring that the algorithm maintains valid lower bounds. This monotonicity is central to the convergence of BSDDP.

    \item Subgradients in the backward pass are computed only at sampled points, making the method well-suited to high-dimensional problems and large scenario trees, avoiding the exponential complexity of exact dynamic programming.
\end{itemize}

\par The following ingredients are present in BSDDP but not in SDDP:
\begin{itemize}
    \item Whenever a scenario $\zeta^k$ appears for the first time in the algorithm, we have $\mathcal{D}(\zeta^k):=k$ and $\ell(k):=k$. In this case, $y_1^{\ell(k)}=y_1^k$, so the first stage update in Step 3 simplifies to \(y_1^k = (1-\tau_0)x_1^k + \tau_0 y_1^k,\) which reduces to $y_1^k = x_1^k$
    since $\tau_0<1$.
    
    \item In Step 2, if $\zeta^{k+1} \in \mathrm{dom}(\mathcal{D})$, the algorithm sets $\ell(k+1):=\mathcal{D}(\zeta^{k+1})$. By construction, $\ell(k+1)\leq k$ refers to the most recent iteration where $\zeta^{k+1}$ was sampled, and all states $x_t^{\ell(k+1)}$ were computed and stored at that time. Thus, $x_{t-1}^{\ell(k+1)}$ is available when accessed in the backward pass. If instead $\zeta^{k+1}\notin \mathrm{dom}(\mathcal{D})$, the algorithm sets $\ell(k+1):=k+1$ and assigns $\Gamma_t^{k+1}:=\Gamma_t^k$ for all $t$. No subgradients are computed, and $x_{t-1}^{\ell(k+1)}$ is never accessed in this case.    
    \item The update step \( y_1^k = (1 - \tau_0) x_1^k + \tau_0 y_1^{\ell(k)} \) introduces stabilization via averaging, akin to Polyak or momentum-based updates. This plays a critical role in obtaining the complexity bound proved later, for a well chosen value of $\tau_0$, see Theorem \ref{complexitysddp}.
    Note that the update step provides at iteration $k$ an approximate first stage solution $y_1^k$. Usually, MSOPs are solved in a rolling horizon setting (see e.g., \cite{rh1, rh2}) and therefore the first stage solution is often the only one which is implemented in practice.
\end{itemize}

To formalize the expectation step in the recursion, we define \( \mathcal{V}_{T}^k \equiv 0 \), and for each stage \( t \in \{1,\ldots,T-1\} \), recall that
\begin{equation}\label{def:V-exp-stp3}
    \mathcal{V}_{t}^k (\cdot)= \mathbb{E}_{\xi_{t+1}}[V_{t}^k(\cdot\, ;\xi_{t+1})].
\end{equation}

Finally, note that each decision \(x_t^k\) depends on the sampled history up to stage \(t\), summarized by:
\begin{equation}\label{dependence}
    \xi_{[t]}^{[k]} :=
\begin{cases}
\left( \xi_2^1,\xi_3^1,\ldots,\xi_T^1,\ldots,\xi_2^{k-1},\xi_3^{k-1},\ldots,\xi_T^{k-1} \right), & \text{if } t = 1, \\
\left( \xi_2^1,\xi_3^1,\ldots,\xi_T^1,\ldots,\xi_2^{k-1},\xi_3^{k-1},\ldots,\xi_T^{k-1},\xi_2^k,\ldots,\xi_t^k \right), & \text{if } t = 2, \dots, T.
\end{cases}
\end{equation}

\section{Complexity analysis of BSDDP applied to strongly convex problems}\label{secstronglyconvex}

In this section, we prove the complexity of the BSDDP algorithm 
stated in the previous section
when applied to multistage stochastic problems with strongly convex cost functions. Our goal is to quantify how the approximation errors evolve over the iterations and how they propagate across stages. To do this, we analyze the structure of the cut updates and their relationship to the true value functions \( \mathcal{Q}_t \).

The next lemma formalizes two basic properties of the algorithm’s approximations: monotonicity of the cut sequence and the quality of the cutting-plane approximation at sampled points. These properties serve as building blocks for bounding the overall error in later lemmas.

\begin{lemma}
Let \( t \in \{1,\ldots,T\} \) and 
$k$ satisfying
\( k > \ell(k) \ge 1 \). Then the cut sequence is monotone and bounded above by the true value function:
    \[
    \Gamma_t^{k-1} \le \Gamma_t^k \le \mathcal{Q}_t, \quad 
    \ell_{\mathcal{V}_t^k} \left(\cdot \,; x_t^{\ell(k)} \right) \le \mathcal{V}_t^k \le \mathcal{Q}_t.
    \]
\end{lemma}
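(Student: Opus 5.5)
The plan is a backward induction over $t$ inside a forward induction over $k$. The induction hypothesis at iteration $k$ is that $\Gamma_t^{k}$ is convex, $M_t$-Lipschitz, nondecreasing in $k$ and satisfies $\Gamma_t^k \le \mathcal{Q}_t$ for every $t=1,\ldots,T$. The base case is the input assumption $\Gamma_t^1\le \mathcal{Q}_t$. For the terminal stage we use $\Gamma_T^k\equiv 0=\mathcal{Q}_T$; the input only gives $\Gamma_T^1\le \mathcal{Q}_T=0$, and every backward pass resets $\Gamma_T^{k}\equiv 0$, so only the case $t\le T-1$ needs real work.

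Monotonicity is direct from the algorithm. Either $\Gamma_t^{k}:=\Gamma_t^{k-1}$ (new scenario), or $\Gamma_t^k=\max\{\Gamma_t^{k-1},\ell_{\mathcal V_t^{k}}(\cdot\,;x_t^{\ell(k)})\}\ge \Gamma_t^{k-1}$ by \eqref{defGamma}. Since $k>\ell(k)$, we are in the second case.

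Next, the cut lies below $\mathcal{V}_t^k$. By Remark~\ref{remsubbded}, $V_t^k(\cdot\,;\xi_{t+1})$ is convex, since $F_{t+1}$ is jointly convex and $\Gamma_{t+1}^k$ is convex (partial minimization of a jointly convex function over a convex set is convex). Moreover, $s_t^{k-1}(\xi_{t+1})$ is a subgradient of $V_t^k(\cdot\,;\xi_{t+1})$ at $x_t^{\ell(k)}$. Hence
\[
V_t^k(x;\xi_{t+1}) \ge V_t^k(x_t^{\ell(k)};\xi_{t+1}) + \langle s_t^{k-1}(\xi_{t+1}), x-x_t^{\ell(k)}\rangle
\]
for each realization. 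Taking expectations over the finite support gives $\ell_{\mathcal V_t^k}(\cdot\,;x_t^{\ell(k)})\le \mathcal{V}_t^k$.

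Finally, $\mathcal{V}_t^k\le\mathcal{Q}_t$. This uses the stage-$(t+1)$ bound at the same iteration, $\Gamma_{t+1}^k\le\mathcal{Q}_{t+1}$. Replacing $\Gamma_{t+1}^k$ by $\mathcal{Q}_{t+1}$ in \eqref{defVtk2} gives $V_t^k(\cdot\,;\xi_{t+1})\le \mathfrak{Q}_t(\cdot\,;\xi_{t+1})$ pointwise. Taking expectations yields $\mathcal{V}_t^k\le\mathcal{Q}_t$ by \eqref{def:Q-exp-stp}.

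Combining the two bounds, $\Gamma_t^k=\max\{\Gamma_t^{k-1},\ell_{\mathcal V_t^k}\}\le\mathcal{Q}_t$, because both arguments are below $\mathcal{Q}_t$: the first by the induction hypothesis, the second by the chain above.

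The main obstacle is the ordering of the induction. The bound for $\Gamma_t^k$ needs $\Gamma_{t+1}^k\le\mathcal{Q}_{t+1}$ at the \emph{same} iteration $k$. This is available because the backward pass processes $t$ from $T-1$ down to $1$ and builds $\Gamma_t^k$ only after $\Gamma_{t+1}^k$ is final. So the induction should run backward in $t$, starting from $\Gamma_T^k\equiv 0=\mathcal{Q}_T$, nested inside the induction in $k$.

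There is also an indexing subtlety. In the algorithm the cut at iteration $k$ is built from $V_t^{k}$, defined with $\Gamma_{t+1}^{k}$, at the point $x_t^{\ell(k)}$. This must match the statement's $\ell_{\mathcal{V}_t^k}(\cdot\,;x_t^{\ell(k)})$ after shifting the algorithm's index $k+1\mapsto k$. I would check this carefully, and also record that each $\Gamma_t^k$ stays convex and $M_t$-Lipschitz, as a maximum of such functions using the subgradient bound $\|s_t^{k-1}\|\le M_t$. This keeps Remark~\ref{remsubbded} applicable at the next iteration.
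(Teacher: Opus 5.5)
Your proposal is correct and follows essentially the same route as the paper. The paper also uses backward induction on $t$ nested in an induction on $k$, derives $V_t^k(\cdot\,;\xi_{t+1})\le\mathfrak{Q}_t(\cdot\,;\xi_{t+1})$ from $\Gamma_{t+1}^k\le\mathcal{Q}_{t+1}$, averages the subgradient inequality over $\xi_{t+1}$ to place the cut below $\mathcal{V}_t^k$, and concludes via the maximum in \eqref{defGamma}; your explicit handling of the new-scenario branch, the reset $\Gamma_T^k\equiv 0$, and the convexity of $\Gamma_{t+1}^k$ only makes precise points the paper leaves implicit.
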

\begin{proof}
The proof is by induction on $k$ and backward induction on $t$. For $t = T$, the relations trivially hold. If the relations hold for some $t \geq 2$ then we have
\begin{align*}
V_{t-1}^k\left(\cdot \,;\xi_{t}\right) &=
\displaystyle \min_{u_{t} \in X_{t}} F_{t}\left(u_{t},\cdot \,;\xi_{t}\right) + \Gamma_{t}^k\left(u_{t}\right) \\
&\le \min_{u_{t} \in X_{t}} F_{t}\left(u_{t},\cdot \,;\xi_{t}\right) + \mathcal Q_{t}\left(u_{t}\right)\\
&= \mathfrak{Q}_{t-1}\left(\cdot \,;\xi_{t}\right).
\end{align*}
It follows that for iterations $k$ where cuts are added
\begin{align*}
\ell_{\mathcal V_{t-1}^k}\left(\cdot \, ;x_{t-1}^{\ell(k)}\right) &= \mathbb{E}_{\xi_{t}} \left[ V_{t-1}^k\left(x_{t-1}^{\ell(k)};\xi_{t}\right)
+ \left\langle s_{t-1}^{k-1}\left(\xi_t\right), \cdot - x_{t-1}^{\ell(k)}  \right\rangle \right] \\
&\le \mathbb{E}_{\xi_{t}}\left[V_{t-1}^k\left(\cdot \,;\xi_{t}\right)\right] \\
&= \mathcal{V}_{t-1}^k\left(\cdot\right)\\
&\le \mathbb{E}_{\xi_{t}}\left[\mathfrak{Q}_{t-1}\left(\cdot \,;\xi_{t}\right)\right] \\
&= \mathcal Q_{t-1}\left(\cdot\right).
\end{align*}
Hence, we have
\[
\Gamma_{t-1}^{k-1}\left(\cdot\right) \le Q_{t-1}\left(\cdot\right),\;\; \ell_{\mathcal{V}_{t-1}^k}\left(\cdot \,;x_{t-1}^{\ell(k)}\right) \le \mathcal Q_{t-1}\left(\cdot\right),
\]
implying
\[
\Gamma_{t-1}^{k-1} \le \Gamma_{t-1}^k \le \mathcal Q_{t-1}.
\]
\end{proof}

We now establish regularity properties of the value functions $\mathcal{Q}_t$ and $\mathcal{V}_t^k$ used in the BSDDP algorithm. These functions are defined recursively through parametric optimization problems involving cost functions in the class $\mathcal{F}$.

The next lemma shows that both $\mathcal{Q}_t$ and its approximation $\mathcal{V}_t^k$ are convex and Lipschitz continuous, and that $\mathcal{V}_t^k$ admits affine minorants with controlled approximation error. These properties are fundamental for the validity of the cutting-plane construction and the complexity analysis that follows.

\begin{lemma}\label{linearizationBound}
For $t \in \{1,\ldots,T\}$ and $k \ge 1$, we have
\begin{enumerate}
    \item for every $y \in X_t$, linearization \(\ell_{\mathcal{V}_t^k}(\cdot \,;y)\) of \(\mathcal{V}_t^k\) satisfies
    \[
    \ell_{\mathcal{V}_t^k}(\cdot \,;y) \geq \mathcal V_t^k(y) - M_t\|\cdot - y\|;
    \]
    \item the function $\mathcal{Q}_t$ is convex and $M_t$-Lipschitz continuous;
    \item the function $\mathcal{V}_t^k$ is convex and $M_t$-Lipschitz continuous;
    \item the linearization from Item 1 also satisfies
    \begin{equation}\label{diffvflinapp}
    \mathcal{V}_{t}^k(\cdot) - \ell_{\mathcal{V}_{t}^k}(\cdot \,;y) \le 2 M_t \|\cdot-y\|.
    \end{equation}
\end{enumerate}
\end{lemma}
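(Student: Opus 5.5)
The plan is to derive all four items from the Lipschitz and subgradient bounds in Remark \ref{remsubbded} (that is, Lemma \ref{subgradValueFunc2} of the Appendix), together with convexity of parametric minimization of jointly convex functions. The order matters: I first establish the Lipschitz properties of the stagewise value functions, and then obtain the linearization bounds as consequences.

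\emph{Items 2 and 3.} I argue by backward induction on $t$, starting from $\mathcal{Q}_T\equiv 0$ and $\mathcal{V}_T^k\equiv 0$. Suppose $\mathcal{Q}_{t+1}$ is convex and finite on $X_{t+1}$, with $t+1\le T$. Fix $\xi_{t+1}$ and take any $x,x'$. Let $u'$ denote the (unique, by strong convexity) minimizer defining $\mathfrak{Q}_t(x';\xi_{t+1})$. Minimality at $x$ gives
\[
\mathfrak{Q}_t(x;\xi_{t+1})-\mathfrak{Q}_t(x';\xi_{t+1})\le F_{t+1}(u',x;\xi_{t+1})-F_{t+1}(u',x';\xi_{t+1})\le M_t\|x-x'\|,
\]
using (A3), $F_{t+1}(\cdot,\cdot;\xi_{t+1})\in{\cal F}(M_t,\mu_{t+1})$. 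By symmetry, $\mathfrak{Q}_t(\cdot;\xi_{t+1})$ is $M_t$-Lipschitz. Convexity follows because the partial minimization over $u$ of the jointly convex function $F_{t+1}(u,x;\xi_{t+1})+\mathcal{Q}_{t+1}(u)+\delta_{X_{t+1}}(u)$ is convex. Taking the expectation over the finite support preserves both convexity and the Lipschitz constant, which gives Item 2. Item 3 is the same argument with $\Gamma_{t+1}^k$ (convex, piecewise affine, finite) in place of $\mathcal{Q}_{t+1}$. Note that no induction on $\Gamma$ is actually needed here, since only convexity of $\Gamma_{t+1}^k$ is used. This is precisely the content of Remark \ref{remsubbded} for $V_t^k(\cdot;\xi_{t+1})$.

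\emph{Item 1.} By definition,
\[
\ell_{\mathcal{V}_t^k}(x;y)=\mathcal{V}_t^k(y)+\mathbb{E}_{\xi_{t+1}}\langle s(\xi_{t+1}),x-y\rangle,
\]
where $s(\xi_{t+1})\in\partial V_t^k(\cdot;\xi_{t+1})(y)$ and $\|s(\xi_{t+1})\|\le M_t$ by Remark \ref{remsubbded}. Cauchy--Schwarz then gives $\langle s,x-y\rangle\ge -M_t\|x-y\|$ pointwise, and taking the expectation yields Item 1.

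\emph{Item 4.} By Item 3,
\[
\mathcal{V}_t^k(x)\le \mathcal{V}_t^k(y)+M_t\|x-y\|.
\]
Subtracting the lower bound from Item 1,
\[
\ell_{\mathcal{V}_t^k}(x;y)\ge \mathcal{V}_t^k(y)-M_t\|x-y\|,
\]
gives \eqref{diffvflinapp}.

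The only delicate point is the domain issue in Items 2 and 3. The Lipschitz property of $F_{t+1}$ is assumed on all of $\mathbb{R}^n$, while minimization is over compact $X_{t+1}$. I need existence of minimizers, which follows from compactness and lower semicontinuity, and finiteness of $\mathcal{Q}_{t+1}$ on $X_{t+1}$; both come from the induction hypothesis. I would handle the case $t=T-1$, where the cost-to-go is $\mathcal{Q}_T\equiv 0$, as the base case. Everything else is routine.
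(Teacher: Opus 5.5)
Your proof is correct. Items 1 and 4 are argued exactly as in the paper; the difference is in Items 2 and 3.

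The paper gets the $M_t$-Lipschitz property of $\mathfrak{Q}_t(\cdot;\xi_{t+1})$ and $V_t^k(\cdot;\xi_{t+1})$ by invoking Lemma \ref{subgradValueFunc2}. That lemma first identifies every subgradient of the value function with a partial subgradient in $\partial_x G(y_x,x)$ at an optimal $y_x$, using Rockafellar's duality result behind Lemma \ref{subgradValueFunc1}. It then bounds that subgradient by $M_t$ and deduces Lipschitz continuity on $X_t$. You instead use the direct comparison $\mathfrak{Q}_t(x;\xi_{t+1})-\mathfrak{Q}_t(x';\xi_{t+1})\le F_{t+1}(u',x;\xi_{t+1})-F_{t+1}(u',x';\xi_{t+1})$ at the minimizer $u'$ for $x'$.

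What your route buys:
\begin{itemize}
\item It is more elementary and needs no duality theory.
\item It gives Lipschitz continuity on all of $\mathbb{R}^{n_t}$ rather than only on $X_t$.
\item It shows that the constant $M_t$ depends only on $F_{t+1}$. The cost-to-go enters only through convexity and finiteness, which is why Item 3 needs no induction on $\Gamma_{t+1}^k$.
\end{itemize}

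What the paper's route buys: in one stroke it also yields the subgradient bound $\|s_t^{k-1}(\xi_{t+1})\|\le M_t$ of Remark \ref{remsubbded}, which Item 1 requires. As written, your Item 1 still takes that bound from the remark, and hence from the Appendix lemma.

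You can make your proof fully self-contained. A convex function that is $M_t$-Lipschitz on all of $\mathbb{R}^{n_t}$ has every subgradient $s$ at $y$ of norm at most $M_t$: put $z=y+s$ into the subgradient inequality to get $\|s\|^2\le M_t\|s\|$. This step relies on your Lipschitz bound holding globally. Lipschitz continuity on $X_t$ alone would not control subgradients at boundary points of $X_t$.
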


\begin{proof}
\begin{enumerate}
    \item The case $t = T$ is immediate. 
    We have
    \begin{align*}
        \ell_{\mathcal{V}_t^k}(\cdot \, ;y) &:= \mathbb{E}_{\xi_{t+1}} \left[ V_{t}^k\left(y; \xi_{t+1}\right) 
+ \left\langle s_{t}^{k-1}(\xi_{t+1}), \cdot - y \right\rangle \right] \\
        &= \mathcal{V}_t^k(y) + \left\langle \mathbb{E}_{\xi_{t+1}}[s_t^{k-1}(\xi_{t+1})], \cdot - y \right\rangle.
    \end{align*}
Using the fact that
$F_{t+1}(\cdot,\cdot \, ;\xi_{t+1}) \in {\cal F}(M_t,\mu_{t+1})$ and Remark \ref{remsubbded}, we obtain
\begin{equation}
\|s_t^{k-1}(\xi_{t+1})\| \leq M_t.
\end{equation}
    
    Then, for all $z \in X_t$,
    \begin{align*}
        \mathcal V_t^k(y) - \ell_{\mathcal{V}_t^k}(z;y) &\le \left\| \mathbb{E}_{\xi_{t+1}}[s_t^{k-1}(\xi_{t+1})] \right\| \cdot \|z - y\| \\
        &\le \mathbb{E}_{\xi_{t+1}}\left[ \|s_t^{k-1}(\xi_{t+1})\| \right] \cdot \|z - y\| \\
        &\le M_t \|z - y\|,
    \end{align*}
    proving the claim.

    \item The result follows by backward induction on $t$. Since $\mathcal{Q}_T \equiv 0$, the base case is trivial. Suppose $\mathcal{Q}_{t+1}$ is convex and $M_{t+1}$-Lipschitz continuous. For each $\omega_{t+1} \in \Xi_{t+1}$, $F_{t+1}(\cdot,\cdot \, ;\omega_{t+1}) \in {\cal F}(M_t,\mu_{t+1})$, and $\mathcal{Q}_{t+1} \in \overline{\text{Conv}}(X_{t+1})$. Then, by Lemma \ref{subgradValueFunc2},
    \[
    Q_t(\cdot ; \omega_{t+1}) = \min_{u_{t+1} \in X_{t+1}} F_{t+1}(u_{t+1},\cdot \, ; \omega_{t+1}) + \mathcal{Q}_{t+1}(u_{t+1})
    \]
    is convex and $M_t$-Lipschitz continuous. Therefore,
    \begin{align*}
        |\mathcal{Q}_t(x) - \mathcal{Q}_t(y)| &= \left| \mathbb{E}_{\xi_{t+1}}[Q_t(x;\xi_{t+1}) - Q_t(y;\xi_{t+1})] \right| \\
        &\le \mathbb{E}_{\xi_{t+1}} \left[ |Q_t(x;\xi_{t+1}) - Q_t(y;\xi_{t+1})| \right] \\
        &\le M_t \|x - y\|.
    \end{align*}

    \item The proof is identical to the proof  of Item 2, replacing $\mathcal{Q}_t$ with $\mathcal{V}_t^k$ and using the definition of $\mathcal{V}_t^k(\cdot)$ as an expectation of $V_t^k(\cdot;\xi_{t+1})$.

    \item We have  
    \begin{align*}
        \mathcal{V}_{t}^k(z) - \ell_{\mathcal{V}_{t}^k}(z \,;y) &\le \mathcal{V}_{t}^k(z) - \left[\mathcal{V}_t^k(y) - M_t\|z - y\|\right] \le 2 M_t \|z - y\|,
    \end{align*}
    where we have used the facts that
    $\V_t^k$ is $M_t$-Lipschitz and by Item 1,
    the norms
    of subgradients of $\mathcal{V}_t^k$ are bounded above by $M_t$. This
    completes the proof.
\end{enumerate}
\end{proof}

\vspace{0.2cm}

We now introduce key quantities that play a central role in the complexity analysis of the BSDDP algorithm. More precisely, setting
\begin{equation}\label{Vtildetk}
\tilde V_t^k = V_t^k \left(x_t^k; \xi_{t+1}^k \right) \mbox{ and }
\tilde Q_t^k = \mathfrak{Q}_t\left(x_t^k; \xi_{t+1}^k \right),
\end{equation}
we define the approximation gap between the true value function $\mathcal{Q}_t$ and its $k$-th iteration approximation $\mathcal{V}_t^k$, evaluated at the forward pass iterate $x_t^k$:
\begin{equation}\label{etkdef}
e_{t}^k = \left(\mathcal{Q}_{t} - \mathcal{V}_{t}^k \right)\left(x_{t}^k \right), \quad
\tilde e_t^k = \tilde Q_t^k - \tilde V_t^k,t=1,\ldots,T-1.
\end{equation}

 We also define the distance between decision variable iterates at stage $t$:

\begin{equation}\label{defdeltatk}
\Delta_t^{k,\ell} = \left\|x_{t}^k-x_{t}^\ell\right\|,t=1,\ldots,T-1.
\end{equation}

We finally define the event
\begin{equation}\label{defEventEk}
E^k = \Big\{\text{for all} \quad t=1, \ldots, T-1: \quad e_t^k \le \tilde e_t^k \Big\}.
\end{equation}

The next result establishes an inequality relating the approximation error at stage $t-1$ to that at stage $t$. This recursive structure is essential to our analysis, as it enables the propagation and control of errors across stages.

\begin{lemma}\label{QminusVrecursion}
For every stage \( t \in \{1,\ldots,T\} \) and iteration \( k \ge 1 \), the following hold:
\begin{enumerate}
    \item
    \begin{equation}\label{QminusVxi}
            \tilde e_t^k
    \le [\mathcal Q_{t+1} - \Gamma_{t+1}^k]\left(x_{t+1}^k \right);
    \end{equation}
    \item we have $\tilde e_{T-1}^k = e_{T-1}^k = 0$ and for $\ell=\ell(k)<k$ and $t<T-1$, 
            \begin{equation}\label{etBoundDelta0}
        \tilde e_t^k \le e_{t+1}^k + 2M_{t+1}\Delta_{t+1}^{k,\ell};
        \end{equation}
    \item if \eqref{defEventEk} holds and $\ell=\ell(k)<k$, then for $t<T-1$,
    \begin{equation}\label{etBoundDelta}
        e_t^k \le 2\sum_{j=t+1}^{T-1} M_{j}\Delta_{j}^{k,\ell}. 
    \end{equation}
\end{enumerate}
\end{lemma}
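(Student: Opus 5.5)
The plan is to prove the three items in order. Item 1 is a direct comparison of optimal values, Item 2 combines Item 1 with the cut added in the backward pass that produced $\Gamma^k$ and with Lemma \ref{linearizationBound}, and Item 3 follows by telescoping Item 2 on the event $E^k$.

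\emph{Item 1.} By the forward pass, $x_{t+1}^k$ is the minimizer in the problem defining $\tilde V_t^k = V_t^k(x_t^k;\xi_{t+1}^k)$. Hence
$$\tilde V_t^k = F_{t+1}(x_{t+1}^k,x_t^k;\xi_{t+1}^k)+\Gamma_{t+1}^k(x_{t+1}^k).$$
Since $x_{t+1}^k\in X_{t+1}$ is feasible for the problem \eqref{secondstodp2-sto} defining $\tilde Q_t^k=\mathfrak{Q}_t(x_t^k;\xi_{t+1}^k)$, we also have
$$\tilde Q_t^k\le F_{t+1}(x_{t+1}^k,x_t^k;\xi_{t+1}^k)+\mathcal Q_{t+1}(x_{t+1}^k).$$
Subtracting the two relations cancels the $F_{t+1}$ terms and gives \eqref{QminusVxi}.

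\emph{Item 2, case $t=T-1$.} Here $\Gamma_T^k\equiv 0\equiv\mathcal Q_T$, so $V_{T-1}^k(\cdot;\xi_T)=\mathfrak{Q}_{T-1}(\cdot;\xi_T)$ for every $\xi_T$. Taking expectations gives $\mathcal V_{T-1}^k=\mathcal Q_{T-1}$, and therefore $e_{T-1}^k=\tilde e_{T-1}^k=0$.

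\emph{Item 2, case $t<T-1$ with $\ell=\ell(k)<k$.} Now $\zeta^k$ was already in $\mathrm{dom}(\mathcal D)$ when it was sampled in Step 2 of iteration $k-1$. That backward pass therefore built $\Gamma^k$ using, by \eqref{defGamma},
$$\Gamma_{t+1}^k\ge \ell_{\mathcal V_{t+1}^{k}}(\cdot\,;x_{t+1}^{\ell}).$$
The step I expect to be the main obstacle is the index bookkeeping here. I must check that the cut is indeed the linearization of $\mathcal V_{t+1}^{k}$, the function built from $\Gamma_{t+2}^k$, and that the forward pass of iteration $k$ uses this updated $\Gamma^k$. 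Both should follow from the ordering of Steps 1 and 2 and from the convention that the backward pass at iteration $k-1$ produces the superscript $k$.

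Granting this, Item 1 gives
$$\tilde e_t^k\le \mathcal Q_{t+1}(x_{t+1}^k)-\ell_{\mathcal V_{t+1}^{k}}(x_{t+1}^k;x_{t+1}^\ell)
= e_{t+1}^k+\bigl[\mathcal V_{t+1}^k(x_{t+1}^k)-\ell_{\mathcal V_{t+1}^{k}}(x_{t+1}^k;x_{t+1}^\ell)\bigr].$$
Applying \eqref{diffvflinapp} from Lemma \ref{linearizationBound} (Item 4) with $y=x_{t+1}^\ell$ bounds the bracket by $2M_{t+1}\Delta_{t+1}^{k,\ell}$. This yields \eqref{etBoundDelta0}.

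\emph{Item 3.} On $E^k$ we have $e_j^k\le\tilde e_j^k$ for every $j\le T-1$. Combined with \eqref{etBoundDelta0}, this gives
$$e_j^k\le e_{j+1}^k+2M_{j+1}\Delta_{j+1}^{k,\ell}\quad\text{for } t\le j<T-1.$$
Summing these inequalities from $j=t$ to $T-2$ and using $e_{T-1}^k=0$ telescopes to \eqref{etBoundDelta}. Alternatively, one can run a backward induction on $t$ starting at $t=T-2$.
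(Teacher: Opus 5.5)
Your proof is correct and follows the paper's argument essentially step for step. Item 1 compares the two optimal values at the common feasible point $x_{t+1}^k$. Item 2 inserts $\mathcal{V}_{t+1}^k$, bounds $\Gamma_{t+1}^k$ below by the cut $\ell_{\mathcal{V}_{t+1}^k}(\cdot\,;x_{t+1}^{\ell})$ from \eqref{defGamma}, and applies \eqref{diffvflinapp}. Item 3 telescopes on $E^k$. The index bookkeeping you flag does hold: the backward pass of iteration $k-1$ runs from $t=T$ down to $2$, so the cut added to $\Gamma_{t+1}^k$ is built from the already updated $\Gamma_{t+2}^k$, and the forward pass of iteration $k$ then uses exactly these $\Gamma^k$.
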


\begin{proof}
\begin{enumerate}
    \item By definition of \( V_{t}^k \), we have
    \begin{align*}
        V_t^k\left(x_t^k; \xi_{t+1}^k \right) 
        &= F_{t+1}\left(x_{t+1}^k, x_t^k; \xi_{t+1}^k \right) + \Gamma_{t+1}^k\left(x_{t+1}^k \right) \\
        &= \left[ F_{t+1}\left(x_{t+1}^k, x_t^k; \xi_{t+1}^k \right) + \mathcal{Q}_{t+1}\left(x_{t+1}^k \right) \right] 
        - \left[ \mathcal Q_{t+1}\left(x_{t+1}^k \right) - \Gamma_{t+1}^k\left(x_{t+1}^k \right) \right] \\
        &\ge \mathfrak{Q}_t \left(x_t^k ; \xi_{t+1}^k \right) - \left[ \mathcal Q_{t+1}\left(x_{t+1}^k \right) - \Gamma_{t+1}^k\left(x_{t+1}^k \right) \right],
    \end{align*}
    where the last inequality follows from the definition of \( \mathfrak{Q}_t \).
    \item The identity $\tilde e_{T-1}^k = e_{T-1}^k = 0$ follows from the fact that $\mathfrak{Q}_T(\cdot, \omega_T) \equiv V_T^k(\cdot, \omega_T) \equiv 0$ for all $k$, \(\omega_T \in \Xi_T\). Moreover, for $\ell=\ell(k)<k$ and $t<T-1$, we have
    \begin{align*}
        \tilde e_{t}^k &\hspace{-0.3em}\stackrel{\eqref{QminusVxi}}{\le} \left(\mathcal{Q}_{t+1} - \Gamma_{t+1}^k \right)\left(x_{t+1}^k \right) \\
        &= \left(\mathcal{Q}_{t+1} - \mathcal{V}_{t+1}^k \right)\left(x_{t+1}^k \right) + \left(\mathcal{V}_{t+1}^k - \Gamma_{t+1}^k \right)\left(x_{t+1}^k \right) \\
        &= e_{t+1}^k + \left(\mathcal{V}_{t+1}^k - \Gamma_{t+1}^k \right)\left(x_{t+1}^k \right) \\
        &\hspace{-0.1em}\stackrel{\eqref{defGamma}}{\le} e_{t+1}^k + \mathcal{V}_{t+1}^k\left(x_{t+1}^k \right) - \ell_{\mathcal{V}_{t+1}^k}\left(x_{t+1}^k; x_{t+1}^\ell \right) \\
        &\hspace{-0.3em}\stackrel{\eqref{diffvflinapp}}{\le} e_{t+1}^k + 2 M_{t+1} \left\|x_{t+1}^k - x_{t+1}^\ell \right\| \\
        &= e_{t+1}^k + 2 M_{t+1} \Delta_{t+1}^{k,\ell}.
    \end{align*}
    \item If \eqref{defEventEk} holds
    then 
    $e_j^k \leq \tilde e_j^k \leq e_{j+1}^k+2M_{j+1} \Delta_{j+1}^{k, \ell}$ for 
    $j=t,\ldots,T-2$.
    The result then follows by summing these inequalities from  $j=t$ up to $T-2$ and using $e_{T-1}^k=0$.
\end{enumerate}
\end{proof}

\vspace*{0.5cm}
\par We now analyze how the value function $\mathcal{V}_0^k$ compares to a previous iterate $\mathcal{V}_0^\ell$. The lemma below establishes a lower bound on the difference $\mathcal{V}_0^k - \tau \mathcal{V}_0^\ell$ for any convex combination parameter $\tau \in [0,1]$. This result is key to understanding how strong convexity translates into measurable descent in the BSDDP recursion and ultimately leads to complexity guarantees.

\begin{lemma}\label{V0recursion}
For $k > \ell=\ell(k) \ge 1$,
and every $\tau \in [0,1]$, we have:
\begin{align*}
    \mathcal{V}_0^k - \tau \mathcal{V}_0^\ell \ge 
    (1 - \tau) (F_1 + \mathcal{Q}_1)\left(x_1^k \right) 
    - (1 - \tau)\left[ e_1^k + 2 M_1 \Delta_1^{k,\ell} \right] 
    + \frac{\mu_1 \tau}{2} \left(\Delta_1^{k,\ell} \right)^2.
    \end{align*}
\end{lemma}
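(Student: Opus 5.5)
Write $\phi^k := F_1 + \Gamma_1^k$. Then $\mathcal{V}_0^k = \phi^k(x_1^k)$ and $x_1^k$ is the unique minimizer of $\phi^k$ on $X_1$. The plan is to split $\mathcal{V}_0^k = (1-\tau)\phi^k(x_1^k) + \tau \phi^k(x_1^k)$ and bound the two pieces separately. The first piece gives the $(1-\tau)$ terms through the cut built at $x_1^\ell$. The second gives the quadratic term through strong convexity of $\phi^\ell$ and monotonicity of the cuts.

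\emph{Step 1 (the $\tau$ part).} Since $\ell=\ell(k)<k$, the previous lemma gives $\Gamma_1^k \ge \Gamma_1^{\ell}$, so $\phi^k \ge \phi^\ell$ pointwise. Here I use monotonicity over several iterations: $\ell<k$ and the cuts are nondecreasing in the iteration index, which holds trivially at iterations where no cut is added. The function $\phi^\ell$ is $\mu_1$-strongly convex because $F_1$ is $\mu_1$-strongly convex and $\Gamma_1^\ell$ is convex. Its minimizer over the convex set $X_1$ is $x_1^\ell$, so the standard quadratic growth at the constrained minimizer yields
\[
\phi^k(x_1^k) \ge \phi^\ell(x_1^k) \ge \phi^\ell(x_1^\ell) + \frac{\mu_1}{2}\left(\Delta_1^{k,\ell}\right)^2 = \mathcal{V}_0^\ell + \frac{\mu_1}{2}\left(\Delta_1^{k,\ell}\right)^2 .
\]
The strong convexity constant here is an assumption I am making: the paper states only ``strongly convex'' for $F_1$, and I take $\mu_1$ to be its modulus. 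Multiplying by $\tau$ gives $\tau\mathcal{V}_0^k \ge \tau\mathcal{V}_0^\ell + \frac{\mu_1\tau}{2}(\Delta_1^{k,\ell})^2$.

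\emph{Step 2 (the $1-\tau$ part).} By the cut update \eqref{defGamma}, at iteration $k$ (the backward pass performed when $\zeta^k$ was resampled) we have $\Gamma_1^k \ge \ell_{\mathcal{V}_1^k}(\cdot\,;x_1^\ell)$. Evaluating at $x_1^k$ and using \eqref{diffvflinapp} gives
\[
\Gamma_1^k(x_1^k) \ge \mathcal{V}_1^k(x_1^k) - 2M_1\Delta_1^{k,\ell} = \mathcal{Q}_1(x_1^k) - e_1^k - 2M_1\Delta_1^{k,\ell}.
\]
Adding $F_1(x_1^k)$ to both sides gives $\phi^k(x_1^k) \ge (F_1+\mathcal{Q}_1)(x_1^k) - e_1^k - 2M_1\Delta_1^{k,\ell}$. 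Multiplying by $1-\tau\ge0$ and adding the result of Step 1 yields exactly the claimed inequality.

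\emph{Main obstacle.} The delicate point is the indexing in Step 2. Cuts added in the backward pass of iteration $k-1$ define $\Gamma^k$ and are computed with $\mathcal{V}_1^{k}$ at $x_1^{\ell(k)}$, so the linearization used really is $\ell_{\mathcal{V}_1^k}(\cdot\,;x_1^{\ell(k)})$. This matches the convention in \eqref{defGamma} and in the proof of Lemma \ref{QminusVrecursion}, and I will verify this carefully. The second point to check is the quadratic growth inequality for a constrained minimizer. It follows from the first-order optimality condition $\langle g, x - x_1^\ell\rangle \ge 0$ for some $g\in\partial\phi^\ell(x_1^\ell)$ and all $x\in X_1$, combined with the strong convexity inequality.
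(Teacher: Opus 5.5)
Your proposal is correct and is essentially the paper's argument. The paper lower-bounds $\Gamma_1^k(x_1^k)$ by the convex combination $(1-\tau)\ell_{\mathcal{V}_1^k}(x_1^k;x_1^\ell)+\tau\Gamma_1^\ell(x_1^k)$ of the two functions in the max \eqref{defGamma} (using monotonicity $\Gamma_1^k\ge\Gamma_1^\ell$), then applies quadratic growth of $F_1+\Gamma_1^\ell$ at $x_1^\ell$ and \eqref{diffvflinapp}; this is exactly your split of $\mathcal{V}_0^k$ into its $\tau$ and $1-\tau$ parts, and your reading of the indexing (the cut $\ell_{\mathcal{V}_1^k}(\cdot\,;x_1^{\ell(k)})$ is added in the backward pass of iteration $k-1$) is right.
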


\begin{proof} Since $x_1^\ell$ is the minimizer of a $\mu_1$-strongly convex function $\left(F_1 + \Gamma_1^\ell \right)$ over $X_1$, we have
    \begin{equation*}
    \left(F_1 + \Gamma_1^\ell \right)(u_1) \ge \left(F_1 + \Gamma_1^\ell \right) \left(x_1^\ell \right) + \frac{\mu_1}{2} \left\|u_1 - x_1^\ell \right\|^2, \quad \forall u_1 \in X_1.
    \end{equation*}
    Applying this inequality at $u_1 = x_1^k$ yields
    \begin{align}\label{V0strongconv}
    \left(F_1 + \Gamma_1^\ell \right)\left(x_1^k \right) \ge \mathcal{V}_0^\ell + \frac{\mu_1}{2} \left\|x_1^k - x_1^\ell\right\|^2.
    \end{align}
    On the other hand, $\ell <k$ which implies by monotonicity of models $\Gamma_1^k$ that $\Gamma_1^k \geq \Gamma_1^{\ell}$.
    Then for the value function at iteration $k$, we have 
    \begin{align*}
    \mathcal{V}_0^k &= F_1\left(x_1^k \right) + \Gamma_1^k\left(x_1^k \right) \\
    &\hspace{-0.1em}\stackrel{\eqref{defGamma}}{\ge} F_1\left(x_1^k \right) + \left[ (1 - \tau)\ell_{\mathcal{V}_1^k}\left(x_1^k; x_1^\ell \right) + \tau \Gamma_1^\ell\left(x_1^k \right) \right] \\
    &= (1 - \tau)\left[ F_1\left(x_1^k \right) + \ell_{\mathcal{V}_1^k}\left(x_1^k; x_1^\ell \right) \right] + \tau \left(F_1 + \Gamma_1^\ell \right)\left(x_1^k \right) \\
    &\hspace{-0.3em}\stackrel{\eqref{V0strongconv}}{\ge} (1 - \tau)\left[ F_1\left(x_1^k \right) + \ell_{\mathcal{V}_1^k}\left(x_1^k; x_1^\ell \right) \right] + \tau \left[ \mathcal{V}_0^\ell + \frac{\mu_1}{2} \left\|x_1^k - x_1^\ell\right\|^2 \right].
    \end{align*}
    Subtracting $\tau \mathcal{V}_0^\ell$ from both sides gives
    \begin{align*}
    \mathcal{V}_0^k - \tau \mathcal{V}_0^\ell &\ge 
    (1 - \tau)\left[ F_1\left(x_1^k \right) + \ell_{\mathcal{V}_1^k}\left(x_1^k; x_1^\ell \right) \right] + \frac{\tau \mu_1}{2} \left\|x_1^k - x_1^\ell\right\|^2 \\
    &\hspace{-0.4em}\stackrel{\eqref{diffvflinapp}}{\ge} (1 - \tau)\left[ (F_1 + \mathcal{V}_1^k)\left(x_1^k \right) - 2 M_1 \left\|x_1^k - x_1^\ell\right\| \right] + \frac{\tau \mu_1}{2} \left\|x_1^k - x_1^\ell\right\|^2.
    \end{align*}
    We obtain
    \begin{align*}
    \mathcal{V}_0^k - \tau \mathcal{V}_0^\ell &\ge (1 - \tau) \left[ (F_1 + \mathcal{V}_1^k)\left(x_1^k \right) - 2 M_1 \left\|x_1^k - x_1^\ell\right\| \right] + \frac{\tau \mu_1}{2} \left\|x_1^k - x_1^\ell\right\|^2\\
    &= (1 - \tau)\left[(F_1 + \mathcal{Q}_1)\left(x_1^k \right) - (\mathcal{Q}_1 - \mathcal{V}_1^k)\left(x_1^k \right) - 2 M_1 \Delta_1^{k,\ell} \right] + \frac{\mu_1 \tau}{2} \left(\Delta_1^{k,\ell} \right)^2 \\
    &= (1 - \tau)\left[ (F_1 + \mathcal{Q}_1)\left(x_1^k \right) - e_1^k - 2 M_1 \Delta_1^{k,\ell} \right] + \frac{\mu_1 \tau}{2} \left(\Delta_1^{k,\ell} \right)^2,
    \end{align*}
    completing the proof.
\end{proof}

\vspace*{0.5cm}

We now consider the expected optimality gap between the true value $(F_1 + \mathcal{Q}_1)\left(y_1^k \right)$, evaluated at the convex combination point $y_1^k$, and the approximated value function $\mathcal{V}_0^k$. This quantity is denoted by $e_0^k$ and serves as a global measure of error for the BSDDP algorithm. The lemma below provides a recursive inequality for $e_0^k$ that captures its evolution across iterations where
\[
e_0^k = \left(F_1 + \mathcal{Q}_1\right)\left(y_1^k \right) - \mathcal{V}_0^k.
\]

\begin{lemma}\label{lembde0}
For $k> \ell=\ell(k) \ge 1$, we have
\begin{equation}\label{first0cruc}
e_0^k - \tau_0 e_0^\ell
\leq (1 - \tau_0) \left[ e_1^k + 2 M_1 \Delta_1^{k,\ell} \right] 
- \frac{\tau_0 \mu_1}{2} \left(\Delta_1^{k,\ell} \right)^2,
\end{equation}
where $\tau_0$ is the averaging parameter used in BSDDP.
\end{lemma}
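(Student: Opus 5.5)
We combine Lemma \ref{V0recursion} with $\tau=\tau_0$ and the convexity of $F_1+\mathcal{Q}_1$ evaluated at the averaged point $y_1^k$. For $k>\ell=\ell(k)$, Step 3 of BSDDP gives $y_1^k=(1-\tau_0)x_1^k+\tau_0 y_1^{\ell}$. Both $x_1^k$ and $y_1^\ell$ lie in the convex set $X_1$; for $y_1^\ell$ this follows by induction, since $y_1^j$ is always a convex combination of forward iterates. Since $F_1+\mathcal{Q}_1$ is convex on $X_1$ (Lemma \ref{linearizationBound}, Item 2), we get
\[
(F_1+\mathcal{Q}_1)(y_1^k)\le (1-\tau_0)(F_1+\mathcal{Q}_1)(x_1^k)+\tau_0 (F_1+\mathcal{Q}_1)(y_1^\ell).
\]

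Next we write
\[
e_0^k-\tau_0 e_0^\ell=(F_1+\mathcal{Q}_1)(y_1^k)-\tau_0(F_1+\mathcal{Q}_1)(y_1^\ell)-\bigl(\mathcal{V}_0^k-\tau_0\mathcal{V}_0^\ell\bigr).
\]
The convexity inequality bounds the first two terms together by $(1-\tau_0)(F_1+\mathcal{Q}_1)(x_1^k)$. Lemma \ref{V0recursion} with $\tau=\tau_0\in(0,1)$ gives a lower bound on $\mathcal{V}_0^k-\tau_0\mathcal{V}_0^\ell$:
\[
(1-\tau_0)(F_1+\mathcal{Q}_1)(x_1^k)-(1-\tau_0)\bigl[e_1^k+2M_1\Delta_1^{k,\ell}\bigr]+\frac{\mu_1\tau_0}{2}\bigl(\Delta_1^{k,\ell}\bigr)^2.
\]
Subtracting this bound, the terms $(1-\tau_0)(F_1+\mathcal{Q}_1)(x_1^k)$ cancel. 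What remains is exactly \eqref{first0cruc}.

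There is no real obstacle; the only points to check are the hypotheses. First, $\ell=\ell(k)<k$ is precisely the case in which Step 3 uses a genuine earlier average $y_1^\ell$, rather than the degenerate case $\ell(k)=k$. Second, Lemma \ref{V0recursion} applies for this same $\ell$. Third, $y_1^\ell\in X_1$, so that $\mathcal{Q}_1$ and $F_1$ are evaluated within their domain and convexity applies.
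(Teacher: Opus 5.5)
Your proposal is correct and follows the paper's proof essentially verbatim. Both use convexity of $F_1+\mathcal{Q}_1$ at $y_1^k=(1-\tau_0)x_1^k+\tau_0 y_1^{\ell}$, rewrite $e_0^k-\tau_0 e_0^\ell$, and apply Lemma \ref{V0recursion} with $\tau=\tau_0$ so that the $(1-\tau_0)(F_1+\mathcal{Q}_1)(x_1^k)$ terms cancel; your explicit check that $y_1^\ell\in X_1$ is a detail the paper leaves implicit.
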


\begin{proof}
By convexity of the function $F_1 + \mathcal{Q}_1$ and using the definition of $y_1^k$, we have
\begin{equation}\label{F1plusQ1conv}
    \begin{aligned}
        (F_1 + \mathcal{Q}_1)\left(y_1^k \right) &= (F_1 + \mathcal{Q}_1) \left((1 - \tau_0)x_1^k + \tau_0 y_1^\ell \right)  
\\&\le (1 - \tau_0)(F_1 + \mathcal{Q}_1)\left(x_1^k \right) + \tau_0 (F_1 + \mathcal{Q}_1) \left(y_1^\ell \right).
    \end{aligned}
\end{equation}
Hence,
\begin{align*}
e_0^k - \tau_0 e_0^\ell
&= (F_1 + \mathcal{Q}_1)\left(y_1^k \right) - \mathcal{V}_0^k
- \tau_0\left[(F_1 + \mathcal{Q}_1)\left(y_1^\ell \right) - \mathcal{V}_0^\ell \right]\\
&= (F_1 + \mathcal{Q}_1)\left(y_1^k \right) - \tau_0 (F_1 + \mathcal{Q}_1)\left(y_1^\ell \right)
- \left[\mathcal{V}_0^k - \tau_0 \mathcal{V}_0^\ell \right]\\
&\hspace{-0.3em}\stackrel{\eqref{F1plusQ1conv}}{\le} (1 - \tau_0) (F_1 + \mathcal{Q}_1)\left(x_1^k \right)
- \left[ \mathcal{V}_0^k - \tau_0 \mathcal{V}_0^\ell \right].
\end{align*}
Applying the bound from Lemma \ref{V0recursion} written for $\tau=\tau_0$, which provides a lower bound on $\mathcal{V}_0^k - \tau_0 \mathcal{V}_0^\ell$, we obtain:
\begin{align*}
&e_0^k - \tau_0 e_0^\ell - (1 - \tau_0) (F_1 + \mathcal{Q}_1)\left(x_1^k \right)\\
&\le - \left( (1 - \tau_0) \left[(F_1 + \mathcal{Q}_1)\left(x_1^k \right) \right] - (1 - \tau_0) \left[e_1^k + 2 M_1 \Delta_1^{k,\ell} \right] + \frac{\tau_0 \mu_1}{2} \left(\Delta_1^{k,\ell} \right)^2 \right),
\end{align*}
easily implying the desired result.
\end{proof}

\vspace*{0.5cm}

We now analyze the behavior of the approximated value functions $\mathcal{V}_t^k$ under the averaging step of the BSDDP algorithm. Specifically, the following lemma establishes a lower bound on the difference $\tilde V_t^k - \tau \tilde V_t^\ell$, which plays a key role in the recursive control of the global error terms.

\begin{lemma}\label{lemlbvtk}
For  $t \in \{1,\ldots,T-1\}$, $k > \ell =\ell(k) \ge 1$ and every \(\tau \in [0,1]\),
\[
    \tilde V_t^k - \tau \tilde V_t^\ell \ge \tau \left( \frac{\mu_{t+1}}{2}\left(\Delta_{t+1}^{k,\ell} \right)^2 - M_t \Delta_t^{k,\ell} \right) + (1-\tau) \Big( \tilde Q_t^k - e_{t+1}^k - 2M_{t+1}\Delta_{t+1}^{k,\ell} \Big).
\]
\end{lemma}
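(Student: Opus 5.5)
The plan is to mimic the proof of Lemma \ref{V0recursion}, but at stage $t+1$ instead of stage $1$. Two observations make this possible.

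\emph{Same scenario.} Since $\ell=\ell(k)<k$, the scenario $\zeta^k$ was already in $\mathrm{dom}(\mathcal D)$ when it was sampled, and $\ell$ is the last iteration at which it was visited. Hence $\xi_{t+1}^k=\xi_{t+1}^\ell$; write $\xi:=\xi_{t+1}^k$. Then
\[
\tilde V_t^k=F_{t+1}(x_{t+1}^k,x_t^k;\xi)+\Gamma_{t+1}^k(x_{t+1}^k),
\qquad
\tilde V_t^\ell=\min_{u\in X_{t+1}}F_{t+1}(u,x_t^\ell;\xi)+\Gamma_{t+1}^\ell(u),
\]
and the minimum defining $\tilde V_t^\ell$ is attained at $x_{t+1}^\ell$.

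\emph{New cut.} In the backward pass of iteration $k-1$, the cut built at $x_{t+1}^{\ell(k)}$ is added. By \eqref{defGamma},
\[
\Gamma_{t+1}^k\ge \ell_{\mathcal V_{t+1}^k}(\cdot\,;x_{t+1}^\ell).
\]
By the monotonicity lemma, also $\Gamma_{t+1}^k\ge\Gamma_{t+1}^\ell$. I will need to check the index conventions carefully here ($\Gamma^{k}$ versus $\Gamma^{k+1}$, and $\mathcal V^k$ versus $\mathcal V^{k+1}$). I expect this bookkeeping to be the main place where the argument could slip.

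\emph{Convex combination.} Bound the maximum of the two lower bounds on $\Gamma_{t+1}^k$ from below by their convex combination with weights $1-\tau$ and $\tau$:
\[
\tilde V_t^k\ge (1-\tau)\Big[F_{t+1}(x_{t+1}^k,x_t^k;\xi)+\ell_{\mathcal V_{t+1}^k}(x_{t+1}^k;x_{t+1}^\ell)\Big]+\tau\Big[F_{t+1}(x_{t+1}^k,x_t^k;\xi)+\Gamma_{t+1}^\ell(x_{t+1}^k)\Big].
\]

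\emph{The $\tau$-bracket.} By (A3), $F_{t+1}(\cdot,\cdot\,;\xi)\in\mathcal F(M_t,\mu_{t+1})$, so $F_{t+1}(y,\cdot\,;\xi)$ is $M_t$-Lipschitz. Therefore
\[
F_{t+1}(x_{t+1}^k,x_t^k;\xi)\ge F_{t+1}(x_{t+1}^k,x_t^\ell;\xi)-M_t\Delta_t^{k,\ell}.
\]
The function $u\mapsto F_{t+1}(u,x_t^\ell;\xi)+\Gamma_{t+1}^\ell(u)$ is $\mu_{t+1}$-strongly convex on $X_{t+1}$ and minimized at $x_{t+1}^\ell$ with value $\tilde V_t^\ell$. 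Evaluating it at $x_{t+1}^k$ gives
\[
F_{t+1}(x_{t+1}^k,x_t^\ell;\xi)+\Gamma_{t+1}^\ell(x_{t+1}^k)\ge \tilde V_t^\ell+\tfrac{\mu_{t+1}}{2}\big(\Delta_{t+1}^{k,\ell}\big)^2.
\]
Combining the two displays, the $\tau$-bracket is at least $\tilde V_t^\ell+\frac{\mu_{t+1}}{2}(\Delta_{t+1}^{k,\ell})^2-M_t\Delta_t^{k,\ell}$.

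\emph{The $(1-\tau)$-bracket.} Apply \eqref{diffvflinapp} at stage $t+1$ to get
\[
\ell_{\mathcal V_{t+1}^k}(x_{t+1}^k;x_{t+1}^\ell)\ge \mathcal V_{t+1}^k(x_{t+1}^k)-2M_{t+1}\Delta_{t+1}^{k,\ell}.
\]
Write $\mathcal V_{t+1}^k(x_{t+1}^k)=\mathcal Q_{t+1}(x_{t+1}^k)-e_{t+1}^k$. Since $x_{t+1}^k\in X_{t+1}$ is feasible in the problem defining $\mathfrak Q_t(x_t^k;\xi)$,
\[
F_{t+1}(x_{t+1}^k,x_t^k;\xi)+\mathcal Q_{t+1}(x_{t+1}^k)\ge \mathfrak Q_t(x_t^k;\xi)=\tilde Q_t^k.
\]
So the $(1-\tau)$-bracket is at least $\tilde Q_t^k-e_{t+1}^k-2M_{t+1}\Delta_{t+1}^{k,\ell}$.

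\emph{Conclusion.} Substitute both bracket bounds into the convex-combination inequality and subtract $\tau\tilde V_t^\ell$ from both sides; this yields the claimed inequality.

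For $t=T-1$ the same computation goes through with $\Gamma_T\equiv\mathcal V_T^k\equiv\mathcal Q_T\equiv0$, reading $e_T^k=0$ and $\Delta_T^{k,\ell}$ as harmless (the terms involving them carry zero weight or can be taken as zero by convention).
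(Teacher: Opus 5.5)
Your proof is correct and takes essentially the same route as the paper. The paper proves the two lower bounds $\tilde V_t^k \ge \tilde V_t^\ell + \frac{\mu_{t+1}}{2}(\Delta_{t+1}^{k,\ell})^2 - M_t\Delta_t^{k,\ell}$ and $\tilde V_t^k \ge \tilde Q_t^k - e_{t+1}^k - 2M_{t+1}\Delta_{t+1}^{k,\ell}$ separately, using exactly your strong-convexity/Lipschitz and cut/feasibility arguments, and then averages them with weights $\tau$ and $1-\tau$; you average one step earlier, at the level of $\Gamma_{t+1}^k$, which is equivalent. The index bookkeeping you were worried about is correct: the backward pass of iteration $k-1$ sets $\Gamma_{t+1}^k = \max\{\Gamma_{t+1}^{k-1}, \ell_{\mathcal V_{t+1}^k}(\cdot\,;x_{t+1}^{\ell(k)})\}$, with $\mathcal V_{t+1}^k$ built from $\Gamma_{t+2}^k$, exactly as you used it.
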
 
\begin{proof} Using strong convexity of the objective
function 
$F_{t+1} \left(\cdot, x_{t}^\ell; \xi_{t+1}^\ell \right) + \Gamma_{t+1}^\ell\left( \cdot \right)$
minimized over $X_{t+1}$ with minimum $x_{t+1}^{\ell}$, we have
for $x_{t+1}^k \in X_{t+1}$ that
\begin{equation}\label{sctp1f}
F_{t+1} \left(x_{t+1}^k, x_{t}^\ell; \xi_{t+1}^\ell \right) + \Gamma_{t+1}^\ell\left(x_{t+1}^k \right) 
\ge F_{t+1} \left(x_{t+1}^\ell, x_{t}^\ell; \xi_{t+1}^\ell \right) + \Gamma_{t+1}^\ell \left(x_{t+1}^\ell \right) + \frac{\mu_{t+1}}{2} \left\|x_{t+1}^k - x_{t+1}^\ell \right\|^2.
\end{equation}

Notice that $\ell=\ell(k)$ implies $\xi_{t+1}^k = \xi_{t+1}^\ell$. Hence
\begin{align*}
    V_{t}^k\left(x_{t}^k; \xi_{t+1}^k \right)   
    &= F_{t+1}\left(x_{t+1}^k, x_{t}^k; \xi_{t+1}^k \right) + \Gamma_{t+1}^k\left(x_{t+1}^k \right) 
    \\&\ge F_{t+1} \left(x_{t+1}^k, x_{t}^k; \xi_{t+1}^\ell \right) + \Gamma_{t+1}^\ell\left(x_{t+1}^k \right) 
    \\&\ge F_{t+1} \left(x_{t+1}^k, x_{t}^\ell; \xi_{t+1}^\ell \right) + \Gamma_{t+1}^\ell\left(x_{t+1}^k \right) - M_t\left\|x_{t}^k - x_{t}^\ell \right\|
    \\&  \stackrel{\eqref{sctp1f}}{\ge} F_{t+1} \left(x_{t+1}^\ell, x_{t}^\ell; \xi_{t+1}^\ell \right) + \Gamma_{t+1}^\ell \left(x_{t+1}^\ell \right) + \frac{\mu_{t+1}}{2} \left\|x_{t+1}^k - x_{t+1}^\ell \right\|^2 - M_t\left\|x_{t}^k - x_{t}^\ell \right\| \\
    &= V_{t}^{\ell} \left(x_{t}^{\ell}; \xi_{t+1}^\ell \right) + \frac{\mu_{t+1}}{2} \left\|x_{t+1}^k - x_{t+1}^\ell \right\|^2 - M_t\left\|x_{t}^k - x_{t}^\ell \right\|,
\end{align*}
that is
\begin{equation}\label{lemlbvtkP1}
    \tilde V_t^k \ge \tilde V_t^\ell + \frac{\mu_{t+1}}{2}\left(\Delta_{t+1}^{k,\ell} \right)^2 - M_t \Delta_t^{k,\ell}.
\end{equation}
Now, since $\ell=\ell(k) < k,$
\begin{align*}
    V_{t}^k\left(x_{t}^k; \xi_{t+1}^k \right) &= F_{t+1}\left(x_{t+1}^k, x_{t}^k; \xi_{t+1}^k \right) + \Gamma_{t+1}^k\left(x_{t+1}^k \right) 
    \\&\ge F_{t+1}\left(x_{t+1}^k, x_{t}^k; \xi_{t+1}^k \right) + \ell_{\mathcal V_{t+1}^k} \left(x_{t+1}^k; x_{t+1}^\ell \right)
    \\&\ge F_{t+1}\left(x_{t+1}^k, x_{t}^k; \xi_{t+1}^k \right) + \mathcal V_{t+1}^k\left(x_{t+1}^k \right) - 2M_{t+1} \left\|x_{t+1}^k - x_{t+1}^\ell\right\|
    \\&= F_{t+1}\left(x_{t+1}^k, x_{t}^k; \xi_{t+1}^k \right) + \mathcal Q_{t+1}\left(x_{t+1}^k \right) - \left(\mathcal Q_{t+1}-\mathcal V_{t+1}^k \right)\left(x_{t+1}^k \right) - 2M_{t+1} \left\|x_{t+1}^k - x_{t+1}^\ell\right\|
    \\&\ge \mathfrak{Q}_{t}\left(x_{t}^k; \xi_{t+1}^k \right) - \left(\mathcal Q_{t+1}-\mathcal V_{t+1}^k \right)\left(x_{t+1}^k \right) - 2M_{t+1} \left\|x_{t+1}^k - x_{t+1}^\ell\right\|,
\end{align*}
or
\begin{equation}\label{lemlbvtkP2}
    \tilde V_t^k \ge \tilde Q_t^k - e_{t+1}^k - 2M_{t+1}\Delta_{t+1}^{k,\ell}.
\end{equation}
Uniting \eqref{lemlbvtkP1} and \eqref{lemlbvtkP2}, we get
    \begin{equation}
        \begin{aligned}
            \tilde V_t^k &\ge \tau \left( \tilde V_t^\ell + \frac{\mu_{t+1}}{2}\left(\Delta_{t+1}^{k,\ell} \right)^2 - M_t \Delta_t^{k,\ell} \right) + (1-\tau) \Big( \tilde Q_t^k - e_{t+1}^k - 2M_{t+1}\Delta_{t+1}^{k,\ell} \Big).
        \end{aligned}
    \end{equation}
The desired result comes from rewriting the inequality above.
\end{proof}

\vspace*{0.5cm}

In the next lemma, we bound the error propagation at stage $t$ in terms of the errors at stages $t$ and $t+1$, as well as the progress quantities $\Delta_t^{k,\ell}$ and $\Delta_{t+1}^{k,\ell}$. This is crucial for establishing a recursive inequality on the error terms that will later support the complexity analysis.

\begin{lemma}\label{lembdet}
For $t \in \{1,\ldots,T-1\}$, $k > \ell = \ell(k) \ge 1$ and every \(\tau \in [0,1]\),
\begin{equation}\label{first1cruc}
    \tilde e_t^k - \tau \tilde e_t^\ell \le \tau \left( 2M_t \Delta_t^{k,\ell} - \frac{\mu_{t+1}}{2}\left(\Delta_{t+1}^{k,\ell} \right)^2 \right) + (1-\tau) \Big( e_{t+1}^k + 2M_{t+1}\Delta_{t+1}^{k,\ell} \Big).
\end{equation}
\end{lemma}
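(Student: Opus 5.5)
The plan is to write the left-hand side of \eqref{first1cruc} as a difference of two terms, using the definitions \eqref{Vtildetk}--\eqref{etkdef}:
\[
\tilde e_t^k - \tau \tilde e_t^\ell = \left(\tilde Q_t^k - \tau \tilde Q_t^\ell\right) - \left(\tilde V_t^k - \tau \tilde V_t^\ell\right).
\]
I then bound the second bracket from below by Lemma \ref{lemlbvtk} and the first bracket from above by Lipschitz continuity of the true value functions.

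First, I apply Lemma \ref{lemlbvtk} with the same $t$, $k$, $\ell=\ell(k)$ and $\tau$. This gives
\[
\tilde e_t^k - \tau \tilde e_t^\ell \le \tilde Q_t^k - \tau \tilde Q_t^\ell - \tau\left(\frac{\mu_{t+1}}{2}\left(\Delta_{t+1}^{k,\ell}\right)^2 - M_t\Delta_t^{k,\ell}\right) - (1-\tau)\left(\tilde Q_t^k - e_{t+1}^k - 2M_{t+1}\Delta_{t+1}^{k,\ell}\right).
\]
The $(1-\tau)\tilde Q_t^k$ term partially cancels $\tilde Q_t^k$. What remains of the $\tilde Q$ terms is therefore $\tau\left(\tilde Q_t^k - \tilde Q_t^\ell\right)$.

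Second, I bound $\tilde Q_t^k - \tilde Q_t^\ell$. As observed in the proof of Lemma \ref{lemlbvtk}, $\ell=\ell(k)$ means that scenario $\zeta^k$ coincides with $\zeta^\ell$, so $\xi_{t+1}^k = \xi_{t+1}^\ell$. Both quantities are thus values of the same function $\mathfrak{Q}_t(\cdot\,;\xi_{t+1}^k)$, evaluated at $x_t^k$ and $x_t^\ell$ respectively. By the argument of Lemma \ref{linearizationBound}, Item 2 (Lemma \ref{subgradValueFunc2} applied with $F_{t+1}(\cdot,\cdot\,;\xi_{t+1})\in\mathcal F(M_t,\mu_{t+1})$ and $\mathcal Q_{t+1}$ convex), this function is $M_t$-Lipschitz. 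Hence
\[
\tilde Q_t^k - \tilde Q_t^\ell \le M_t\Delta_t^{k,\ell}.
\]

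Combining the two steps, the $\tau M_t\Delta_t^{k,\ell}$ contributed by the Lipschitz bound adds to the $\tau M_t\Delta_t^{k,\ell}$ coming from Lemma \ref{lemlbvtk}, giving $2\tau M_t\Delta_t^{k,\ell}$. The remaining terms are exactly those on the right-hand side of \eqref{first1cruc}.

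The only step needing care is the scenario identification $\xi_{t+1}^k=\xi_{t+1}^\ell$, since the Lipschitz bound requires both points to be evaluated in the same function. This is immediate from the definition of $\ell(k)$ through the dictionary $\mathcal D$. For $t=T-1$, the function $\mathfrak Q_{T-1}(\cdot\,;\xi_T)$ is still $M_{T-1}$-Lipschitz, with $\mathcal Q_T\equiv 0$, so that case needs no separate treatment.
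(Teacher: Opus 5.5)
Your proof is correct and follows the paper's argument: you write $\tilde e_t^k - \tau \tilde e_t^\ell$ as $(\tilde Q_t^k - \tau \tilde Q_t^\ell) - (\tilde V_t^k - \tau \tilde V_t^\ell)$, insert the lower bound of Lemma \ref{lemlbvtk}, and bound the leftover $\tau(\tilde Q_t^k - \tilde Q_t^\ell)$ by $\tau M_t \Delta_t^{k,\ell}$ using $\xi_{t+1}^k = \xi_{t+1}^\ell$ and $M_t$-Lipschitz continuity of $\mathfrak{Q}_t(\cdot\,;\xi_{t+1}^k)$. You are in fact more explicit than the paper about the cancellation of the $(1-\tau)\tilde Q_t^k$ term and about the case $t=T-1$.
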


\begin{proof} 
The inequality easily follows from
        \begin{equation}
        \begin{aligned}
            \tilde e_t^k - \tau \tilde e_t^\ell &= \tilde Q_t^k - \tilde V_t^k - \tau \tilde Q_t^\ell + \tau \tilde V_t^\ell
            \\ &\le  \tilde Q_t^k - \tau \tilde Q_t^\ell - \tau \left( \frac{\mu_{t+1}}{2}\left(\Delta_{t+1}^{k,\ell} \right)^2 - M_t \Delta_t^{k,\ell} \right) - (1-\tau) \Big( \tilde Q_t^k - e_{t+1}^k - 2M_{t+1}\Delta_{t+1}^{k,\ell} \Big),
        \end{aligned}
    \end{equation}
    the relation
$\mathfrak{Q}_t \left(\cdot, \xi_{t+1}^k \right) \equiv \mathfrak{Q}_t \left(\cdot, \xi_{t+1}^\ell \right)$
which holds for
$\ell=\ell(k)$
and $M_t$-Lipschitz continuity of 
$\mathfrak{Q}_t \left(\cdot, \xi_{t+1}^k \right) \equiv \mathfrak{Q}_t \left(\cdot, \xi_{t+1}^\ell \right)$.
\end{proof}

\vspace*{0.5cm}

\begin{rem} A crucial fact
in our analysis, which uses strong convexity, are the terms $-\frac{\tau \mu_{t+1}}{2}(\Delta_{t+1}^{k,\ell})^2$ in the right-hand sides of inequalities \eqref{first0cruc} and \eqref{first1cruc}.
Considering a weighted sum of these inequalities, for well-chosen weights, will provide us terms
of form 
$-a_t x_t^2+b_t x_t$ for $x_t=\Delta_t^k$
and $a_t=\frac{\tau}{2}\mu_t \theta_{t-1}>0$ for some weight $\theta_{t-1}>0$.
This will allow us to eliminate
terms $\Delta_t^k$ using the inequality
$-a_t x_t^2+b_t x_t \leq b_t^2/4a_t$, see Lemma \ref{prooflemreccruc} below and its proof for details.
\end{rem}

\vspace*{0.5cm}

On the basis of our previous remark, we now introduce a bound on a weighted aggregate error measure and derive a recursive bound that will be instrumental in proving the iteration complexity of BSDDP method.

\begin{lemma}\label{prooflemreccruc} Let \( \varepsilon \in (0, 1/2) \) be given. Define
\[
\tau := \frac{1}{1 + \varepsilon^{2^{T-1} - 1}} \in (0,1), \qquad 
\theta_t := \varepsilon^{2^{T-1} - 2^{T-1 - t}}, \quad t = 0, \ldots, T-2.
\]
Asssume that nonnegative constants \(w_t^k, \Delta_t^k\) satisfy for $t=0,\ldots,T-2$:
\[
w_t^{k+1} - \tau w_t^k 
\le 2(1 - \tau) \sum_{j=t+1}^{T-1} M_j \Delta_j^k + 2 \tau M_t \Delta_t^k - \frac{\tau \mu_{t+1}}{2} (\Delta_{t+1}^k)^2,
\]
with $\Delta_0^k = 0.$ Finally, let $r^k$ be the weighted sum
\[
r^k := \sum_{t=0}^{T-2} \theta_t w_t^k.
\]
Then the recursive inequality
\[
r^{k+1} \le \tau r^k + (1 - \tau) \sum_{t=1}^{T-1} \frac{20 M_t^2 \varepsilon}{\mu_t}
\]
holds.
\end{lemma}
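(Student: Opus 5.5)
The plan is to take the weighted sum of the hypothesised inequalities with weights $\theta_t$, collect the terms in each $\Delta_j^k$, and eliminate them with the quadratic bound $-a x^2+bx\le b^2/(4a)$ from the preceding remark. The whole proof then reduces to checking exponents of $\varepsilon$.

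\textbf{Step 1 (weighted sum).} Multiply the inequality for stage $t$ by $\theta_t>0$ and sum over $t=0,\ldots,T-2$. The left-hand side becomes exactly $r^{k+1}-\tau r^k$. On the right-hand side, fix $j\in\{1,\ldots,T-1\}$ and collect the terms in $\Delta_j^k$ (the term $2\tau M_0\Delta_0^k$ vanishes since $\Delta_0^k=0$).
\begin{itemize}
\item The linear coefficient is $b_j=2(1-\tau)M_j\sum_{t=0}^{j-1}\theta_t+2\tau M_j\theta_j$, where we set $\theta_{T-1}:=0$ because stage $T-1$ has no own inequality.
\item The quadratic coefficient is $a_j=\tau\mu_j\theta_{j-1}/2$, which comes from the stage $t=j-1$ inequality.
\end{itemize}
Hence
\[
r^{k+1}-\tau r^k\le\sum_{j=1}^{T-1}\bigl(b_j\Delta_j^k-a_j(\Delta_j^k)^2\bigr)\le\sum_{j=1}^{T-1}\frac{b_j^2}{4a_j}.
\]

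\textbf{Step 2 (bounding $b_j$).} Since $\varepsilon<1/2$ and the exponents $2^{T-1}-2^{T-1-t}$ are increasing in $t$, the weights $\theta_t$ decrease, with $\theta_0=1$. The exponents are $0,2^{T-2},\ldots$, which increase by at least one at each step. Therefore $\sum_{t<j}\theta_t\le\sum_{i\ge0}\varepsilon^i\le 2$. This gives $b_j\le 4(1-\tau)M_j+2\tau\theta_jM_j$. Using $(x+y)^2\le 2x^2+2y^2$, we get
\[
\frac{b_j^2}{4a_j}\le\frac{M_j^2}{\mu_j}\Bigl(\frac{16(1-\tau)^2}{\tau\theta_{j-1}}+\frac{4\tau\theta_j^2}{\theta_{j-1}}\Bigr).
\]

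\textbf{Step 3 (exponent bookkeeping; the main obstacle).} Set $N:=2^{T-1}-1$. The identity $(1-\tau)/\tau=\varepsilon^N$ follows directly from the definition of $\tau$.
\begin{itemize}
\item First term: since $\theta_{j-1}\ge\theta_{T-2}=\varepsilon^{N-1}$, we get $16(1-\tau)^2/(\tau\theta_{j-1})=16(1-\tau)\varepsilon^N/\theta_{j-1}\le16(1-\tau)\varepsilon$.
\item Second term, for $j\le T-2$: the exponent of $\theta_j^2/\theta_{j-1}$ is $2(2^{T-1}-2^{T-1-j})-(2^{T-1}-2^{T-j})=2^{T-1}=N+1$. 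Hence $4\tau\theta_j^2/\theta_{j-1}=4\tau\varepsilon^{N+1}=4(1-\tau)\varepsilon$.
\item For $j=T-1$ the second term is absent.
\end{itemize}
Adding the two contributions gives $b_j^2/(4a_j)\le 20(1-\tau)\varepsilon M_j^2/\mu_j$. Summing over $j$ yields the claimed recursion $r^{k+1}\le\tau r^k+(1-\tau)\sum_{t=1}^{T-1}20M_t^2\varepsilon/\mu_t$.

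The delicate point is exactly this matching. The choice of $\theta_t$ is designed so that $\theta_j^2/\theta_{j-1}$ equals $(1-\tau)\varepsilon/\tau$, and so that $\theta_{j-1}$ never drops below $\varepsilon^{N-1}$. I would verify both identities carefully, including the edge cases $j=1$ (where $\theta_0=1$) and $j=T-1$.
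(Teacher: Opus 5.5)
Your proof is correct and follows the paper's argument essentially step for step. You use the $\theta_t$-weighted sum, the same coefficients $a_j=\tau\mu_j\theta_{j-1}/2$ and $b_j$ (with $\theta_{T-1}:=0$), the bound $\sum_{t<j}\theta_t\le 2$, and the two exponent facts $\frac{1-\tau}{\tau\theta_{j-1}}\le\varepsilon$ and $\frac{\tau\theta_j^2}{(1-\tau)\theta_{j-1}}=\varepsilon$ that yield $16+4=20$; your phrasing via distinct integer exponents and $\theta_{j-1}\ge\theta_{T-2}=\varepsilon^{N-1}$ is just a slightly cleaner version of the same bookkeeping.
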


\begin{proof}
Multiplying each inequality by $\theta_t$ and summing from $t = 0$ to $T-2$, we get:
\begin{align*}
&r^{k+1} - \tau r^k 
\\&\le 2(1 - \tau) \left[ \sum_{j=1}^{T-1} M_j \Delta_j^k + \sum_{t=1}^{T-2} \theta_t \sum_{j=t+1}^{T-1} M_j \Delta_j^k \right] + 2\tau \sum_{t=1}^{T-2} \theta_t M_t \Delta_t^k - \frac{\tau}{2} \sum_{t=0}^{T-2} \theta_t \mu_{t+1} (\Delta_{t+1}^k)^2.
\end{align*}
Note that
\[
\sum_{t=1}^{T-2} \theta_t \sum_{j=t+1}^{T-1} M_j \Delta_j^k = \sum_{j=2}^{T-1} M_j \Delta_j^k \sum_{t=1}^{j-1} \theta_t.
\]
Thus, we can rewrite the bound as
\begin{align*}
r^{k+1} - \tau r^k + \frac{\tau}{2} \sum_{t=1}^{T-1} \theta_{t-1} \mu_t (\Delta_t^k)^2 
&\le 2(1 - \tau) \left[ \sum_{j=1}^{T-1} M_j \Delta_j^k + \sum_{j=2}^{T-1} M_j \Delta_j^k \sum_{t=1}^{j-1} \theta_t \right] + 2\tau \sum_{j=1}^{T-1} M_j \Delta_j^k \theta_j \\
&= 2 \sum_{t=1}^{T-1} M_t \Delta_t^k \left[\tau \theta_t + (1 - \tau)\sum_{j=0}^{t-1} \theta_j\right].
\end{align*}
Finally, applying the inequality $-a_t x_t^2 + b_t x_t \le \frac{b_t^2}{4a_t}$ with
\[
a_t = \frac{\tau}{2} \theta_{t-1} \mu_t, \quad b_t = 2 M_t \left[ \tau \theta_t + (1 - \tau) \sum_{j=0}^{t-1} \theta_j \right], \quad x_t = \Delta_t^k,
\]
gives
\[
2 M_t  \left[\tau \theta_t + (1 - \tau)\sum_{j=0}^{t-1} \theta_j\right] \Delta_t^k - \frac{\tau}{2} \theta_{t-1} \mu_t (\Delta_t^k)^2 \le (1-\tau) C_t,
\]
where
\[
C_t := \displaystyle \frac{2 M_t^2 \left[\tau \theta_t +  (1-\tau)\displaystyle \sum_{j=0}^{t-1} \theta_j\right]^2}{\tau (1-\tau)\mu_t \theta_{t-1}}, \quad \text{for } \theta_0 := 1 \text{ and } \theta_{T-1} := 0.
\]
Now, we observe that
\[
2^{T-2} - 2^{T-1 - t} = 2^{T-1 - t}(2^{t-1} - 1) \ge 2^{t-1} - 1 \ge t - 1.
\]
This ensures that the exponent defining \( \theta_t \) grows with \( t \), and in particular, for all \( t \in \{1, \ldots, T-2\} \), we have:
\[
\frac{1 - \tau}{\tau \theta_{t-1}} = \varepsilon^{2^{T - t} - 1} \le \varepsilon, \qquad
\frac{\tau \theta_t^2}{(1 - \tau) \theta_{t-1}} = \varepsilon.
\]

Next, consider the term \( \displaystyle \sum_{j = 0}^{t-1} \theta_j \). Using the definition of \( \theta_t \), we can bound this term as:
\[
\sum_{j = 0}^{t-1} \theta_j 
= 1 + \sum_{j = 1}^{t-1} \varepsilon^{2^{T-1} - 2^{T - 1 - j}} 
\le 1 + \varepsilon^{2^{T - 2}} \sum_{j = 1}^{t-1} \varepsilon^{j - 1} 
\le 1 + \frac{\varepsilon^{2^{T - 2}}}{1 - \varepsilon} 
\le 1 + \frac{\varepsilon}{1 - \varepsilon} 
\le 2,
\]
where the last inequality uses \( \varepsilon < 1/2 \).

We now bound the constant \( C_t \) defined in the previous lemma:
\[
C_t 
\le \frac{2 M_t^2 \left[ 2(1 - \tau) + \tau \theta_t \right]^2}{\mu_t \tau \theta_{t-1} (1 - \tau)}.
\]
Using \( (a + b)^2 \le 2a^2 + 2b^2 \) and the bounds above:
\[
C_t 
\le \frac{4 M_t^2}{\mu_t} \left( \frac{4(1 - \tau)}{\tau \theta_{t-1}} + \frac{\tau \theta_t^2}{(1 - \tau) \theta_{t-1}} \right) 
\le \frac{20 M_t^2 \varepsilon}{\mu_t}.
\]
We finally obtain
$$
r^{k+1}-\tau r^k \leq (1-\tau)\sum_{t=1}^{T-1} C_t \leq 
 (1 - \tau) \sum_{t=1}^{T-1} \frac{20 M_t^2 \varepsilon}{\mu_t},
$$
which achieves the proof of the lemma.
\end{proof}

\vspace{0.2cm}

To control the error $e_0^k$ at a future iteration $k$, we leverage the recursive structure of the error bounds together with a careful choice of scenario sequences. In particular, if we collect enough iterations where the events \( E^k \) occur and the sampled scenarios coincide, then we can show that the error at the first stage decays exponentially fast in the number of such iterations. This is formalized in the following lemma.

Here, we introduce the constant 
\begin{equation}\label{defc}
c = 2 \sum_{t=1}^{T-1} M_t D_t,
\end{equation}
where $D_t := \sup_{x,x' \in X_t} \|x - x'\| < \infty$ is the diameter of $X_t.$

\begin{lemma}\label{lemPHP}
    Suppose that $k_1<k_2<\ldots < k_{P+1}$ satisfy, for all $i=2,\ldots,P,P+1$: event $E^{k_i}$ occurs and \(k_{i-1} = \ell(k_{i}) < k_i.\) Then, for
    \begin{equation}\label{defTau}
    \varepsilon = \frac{\bar \varepsilon}{\displaystyle 40 \sum_{t=1}^{T-1} \frac{M_t^2}{\mu_t}} < \frac{1}{2}, \quad \tau = \frac{1}{1 + \varepsilon^{2^{T-1} - 1}},
    \end{equation}
    we have
    \[
    e_0^{k_{P+1}} \le \frac{\bar \varepsilon}{2} + 2c \cdot \tau^{P}.
    \]
\end{lemma}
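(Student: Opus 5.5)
The plan is to verify the hypotheses of Lemma \ref{prooflemreccruc} along the chain $k_1<\dots<k_{P+1}$ and then unroll the resulting linear recursion. For $i=1,\ldots,P+1$ set $w_0^i := e_0^{k_i}$ and $w_t^i := \tilde e_t^{k_i}$ for $t=1,\ldots,T-2$. For the step from $k_i$ to $k_{i+1}$ ($i=1,\ldots,P$), set $\Delta_t^i := \Delta_t^{k_{i+1},k_i}$ for $t\ge 1$ and $\Delta_0^i:=0$.

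The first step is to obtain the per-stage inequalities. Since $k_i=\ell(k_{i+1})<k_{i+1}$, Lemma \ref{lembde0} with $\tau_0=\tau$ gives
\[
w_0^{i+1}-\tau w_0^i \le (1-\tau)\bigl[e_1^{k_{i+1}}+2M_1\Delta_1^i\bigr]-\tfrac{\tau\mu_1}{2}(\Delta_1^i)^2 .
\]
Lemma \ref{lembdet} gives the analogous bound for $t=1,\ldots,T-2$, with the extra term $2\tau M_t\Delta_t^i$ and with $e_{t+1}^{k_{i+1}}$ in place of $e_1^{k_{i+1}}$. Because $E^{k_{i+1}}$ occurs, Lemma \ref{QminusVrecursion}(3) yields $e_{t+1}^{k_{i+1}}\le 2\sum_{j=t+2}^{T-1}M_j\Delta_j^i$, with the convention $e_{T-1}=0$. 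Hence $e_{t+1}^{k_{i+1}}+2M_{t+1}\Delta_{t+1}^i\le 2\sum_{j=t+1}^{T-1}M_j\Delta_j^i$, which is exactly the hypothesis of Lemma \ref{prooflemreccruc}. One point to watch is that Lemma \ref{prooflemreccruc} asks for nonnegative $w_t^k$. Nonnegativity holds for $\tilde e_t$, since $V_t^k\le\mathfrak Q_t$ by Lemma 4.1. For $e_0$, $\mathcal V_0^k\le \min(F_1+\mathcal Q_1)\le (F_1+\mathcal Q_1)(y_1^k)$. In any case, the proof of Lemma \ref{prooflemreccruc} only uses the linear inequalities, so sign is not essential. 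The condition $\varepsilon<1/2$ is assumed in \eqref{defTau}.

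The second step is the recursion itself. Lemma \ref{prooflemreccruc} gives $r^{i+1}\le\tau r^i+(1-\tau)\sum_t 20M_t^2\varepsilon/\mu_t=\tau r^i+(1-\tau)\bar\varepsilon/2$, by the choice of $\varepsilon$. Unrolling over $i=1,\ldots,P$ gives
\[
r^{P+1}\le \tau^P r^1+(1-\tau^P)\bar\varepsilon/2\le \tau^P r^1+\bar\varepsilon/2 .
\]
Since $\theta_0=1$ and all $w_t\ge0$, we get $e_0^{k_{P+1}}=w_0^{P+1}\le r^{P+1}$.

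The remaining step, which I expect to be the main obstacle, is bounding $r^1\le 2c$ using only the diameters and Lipschitz constants. Since $\theta_t\le 1$, it suffices to show $\sum_{t=0}^{T-2} w_t^{k_1}\le 2c$, or to bound each term suitably.

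For $t\ge1$, Lemma \ref{QminusVrecursion}(1) gives $\tilde e_t^{k}\le(\mathcal Q_{t+1}-\Gamma_{t+1}^k)(x_{t+1}^k)$. This has to be bounded by something like $2M_{t+1}D_{t+1}$ or by a sum of $M_jD_j$. The natural bound comes from the fact that $\Gamma^k_{t+1}$ contains at least one cut at a point of $X_{t+1}$, combined with \eqref{diffvflinapp} and the Lipschitz property. This may require that $k_1$ is not the very first iteration, or it may require using the initial cuts. If it fails, I would fall back on an iterative use of $\tilde e_t\le e_{t+1}+2M_{t+1}D_{t+1}$ from \eqref{etBoundDelta0} together with $e_{T-1}=0$.

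For $t=0$, I would write
\[
e_0^{k_1}=(F_1+\mathcal Q_1)(y_1^{k_1})-\min_{X_1}(F_1+\Gamma_1^{k_1}).
\]
I would then bound the gap $\mathcal Q_1-\Gamma_1$ by $2M_1D_1$ in the same way, and handle the difference between $y_1$ and the minimizer using optimality and Lipschitz continuity. The claimed constant $2c=4\sum M_tD_t$ leaves some slack for these crude estimates, so I would aim for $\sum_t \theta_t w_t^{k_1}\le 2\sum_t M_tD_t\cdot 2$, using that the sum of the $\theta_t$ is at most $2$.
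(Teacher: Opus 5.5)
Your first two steps are the paper's argument. Your indexing, which controls $e_{t+1}^{k_{i+1}}$ through $E^{k_{i+1}}$ and $\ell(k_{i+1})=k_i$, is cleaner than the paper's display \eqref{lemPHPineq1}. Your nonnegativity check is exactly what $e_0^{k_{P+1}}\le r^{P+1}$ needs.

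The gap is the bound $r^1\le 2c$, which you leave open, and none of your sketches closes it as stated.

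\begin{itemize}
\item The lemma's hypotheses say nothing about $k_1$. In a single-scenario instance with $k_i=i$, every hypothesis holds with $k_1=1$. Then $\Gamma_t^{k_1}=\Gamma_t^1$ are the input cuts, which need only be convex, $M_t$-Lipschitz and below $\mathcal{Q}_t$ (e.g.\ a linearization of $\mathcal{Q}_t$ shifted down by $L>0$). In that case $r^1\ge w_0^1=e_0^{k_1}$ grows linearly in $L$, so no bound in terms of $c$ is possible.
\item Your fallback through \eqref{etBoundDelta0} and Lemma \ref{QminusVrecursion}(3) needs $\ell(k_1)<k_1$ and $E^{k_1}$, which are assumed only for $i\ge 2$. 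The paper's own justification, $r^1\le 2\sup_t w_t^1\le 2c$ ``by \eqref{lemPHPineq1}'', is precisely this fallback at $i=1$. It rests on the same unstated hypotheses and does not treat $w_0^1$ at all.
\item Even when cuts are present, one cut does not give $2M_{t+1}D_{t+1}$, and your claim $\mathcal{Q}_1-\Gamma_1\le 2M_1D_1$ is wrong. The bound \eqref{diffvflinapp} controls $\mathcal{V}_{t+1}^{k'}-\ell_{\mathcal{V}_{t+1}^{k'}}$. The remaining gap $\mathcal{Q}_{t+1}-\mathcal{V}_{t+1}^{k'}$ is only controlled by $\sup_{X_{t+2}}(\mathcal{Q}_{t+2}-\Gamma_{t+2}^{k'})$, so the bound is cumulative.
\end{itemize}

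The missing idea is that a backward pass of BSDDP refreshes every stage, from $\Gamma_T\equiv 0$ down to stage $1$. By backward induction and monotonicity of the cuts, it yields
\[
\sup_{X_t}(\mathcal{Q}_t-\Gamma_t^{k})\le c_t:=2\sum_{j=t}^{T-1}M_jD_j
\]
at every later iteration $k$. Hence $\tilde e_t^{k}\le c_{t+1}\le c-2M_1D_1$ for $t\ge 1$. Also $e_0^{k}\le 2M_1\|y_1^{k}-x_1^{k}\|+(\mathcal{Q}_1-\Gamma_1^{k})(x_1^{k})\le c+2M_1D_1$. So $w_0\le c$, which your last sentence (and the paper's $\sup_t w_t$ accounting) aims for, is not what the estimates give. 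Instead, use $\theta_0=1$ and $\sum_{t\ge 1}\theta_t\le\varepsilon/(1-\varepsilon)\le 1$ to get $r\le 2c$.

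Because $\ell(k_2)=k_1<k_2$, such a pass occurs at iteration $k_2-1$, so $r^2\le 2c$ holds under the stated hypotheses. Running your recursion from $i=2$ then rigorously gives $e_0^{k_{P+1}}\le\bar\varepsilon/2+2c\,\tau^{P-1}$. To obtain $\tau^{P}$, you must add the hypothesis $\ell(k_1)<k_1$, which guarantees a backward pass at iteration $k_1-1$. Alternatively, in Theorem \ref{complexitysddp} you can use $P+2$ visits of the repeated scenario instead of $P+1$.
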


\begin{proof}
Since event $E^{k_i}$ holds, we have
$e_t^{k_{i}} \le \tilde e_t^{k_{i}}$
and using inequalities~\eqref{etBoundDelta0} and \eqref{etBoundDelta}, we have for each stage $t \ge 1$ and each index $i=1,\ldots,P$:
\begin{equation}\label{lemPHPineq1}
    e_t^{k_{i}} \le \tilde e_t^{k_{i}} \le 2 \sum_{j=t+1}^{T-1} M_j \Delta_j^{k_{i+1},k_i}.
\end{equation}
Using \eqref{lemPHPineq1} and
Lemma~\ref{lembdet}, we have for each stage $t=1,\ldots,T-2$, and each index $i=1,\ldots,P$:
    \[
    \tilde e_t^{k_{i+1}} - \tau \tilde e_t^{k_i} 
    \le \tau \left( 2M_t \Delta_t^{k_{i+1},k_i} - \frac{\mu_{t+1}}{2} \left(\Delta_{t+1}^{k_{i+1},k_i} \right)^2 \right) 
    + 2(1-\tau) \sum_{j=t+1}^{T-1} M_j \Delta_j^{k_{i+1},k_i}.
    \]
    Using Lemma
    \ref{lembde0},
    we also have for $i=1,\ldots,P$,
    \[
    e_0^{k_{i+1}} - \tau e_0^{k_i} 
    \le - \tau \frac{\mu_{1}}{2} \left(\Delta_{1}^{k_{i+1},k_i} \right)^2 
    + 2(1-\tau) \sum_{j=1}^{T-1} M_j \Delta_j^{k_{i+1},k_i}.
    \]
    For brevity, let us denote for $i=1,\ldots,P$,
    \[
w_0^i = e_0^{k_i},\quad 
    w_t^i := \tilde e_t^{k_i},\;\;t=1,\ldots,T-2, \quad \Delta_t^i := \Delta_t^{k_{i+1},k_i},\;\;t=1,\ldots,T-1.
    \]
    Then the recurrences simplify to:
    \[
    w_t^{i+1} - \tau w_t^i 
    \le \tau \left( 2M_t \Delta_t^i - \frac{\mu_{t+1}}{2}(\Delta_{t+1}^i)^2 \right) 
    + 2(1-\tau) \sum_{j=t+1}^{T-1} M_j \Delta_j^i
    \]
  for $t=1,\ldots,T-2$, $i=1,\ldots,P$,
    and
    \[
    w_0^{i+1} - w_0^{i} 
    \le - \frac{\tau \mu_{1}}{2}(\Delta_{1}^{i})^2 
    + 2(1-\tau) \sum_{j=1}^{T-1} M_j \Delta_j^{i}
    \]
    for $i=1,\ldots,P$.
    These are exactly the recurrences used in Lemma~\ref{prooflemreccruc}, which provides, using this lemma:
    \begin{equation}\label{finallemanter}
    r^{P+1} - 20 \varepsilon \sum_{t=1}^{T-1} \frac{M_t^2}{\mu_t} \le \tau^P \cdot r^1,
    \end{equation}
    where 
    $$r^i := \sum_{t=0}^{T-2} \varepsilon^{2^{T-1} - 2^{T-1 - t}} w_t^i.$$
    Since all terms in the sum defining $r^{P+1}$ are nonnegative, we can bound
    \begin{equation}\label{bounde0kp}
    r^{P+1} \ge w_0^{P+1}=e_0^{k_{P+1}}.
    \end{equation}
    Moreover, we have shown in the proof of Lemma~\ref{prooflemreccruc} that
$$
\sum_{t=0}^{T-2} \varepsilon^{2^{T-1} - 2^{T-1 - t}} \leq 2,
    $$
    which gives
\begin{equation}\label{finbcruc}
\qquad r^1 \leq  2 \sup_{t}  w_{t}^{1} \le 2c
  \end{equation}  
    where the last inequality comes from \eqref{lemPHPineq1}
    for constant $c$
    defined in \eqref{defc}.
    Therefore,
    \[
 e_0^{k_{P+1}} = w_0^{P+1} \stackrel{\eqref{bounde0kp}}{\le} r^{P+1} 
 \stackrel{\eqref{finbcruc}, \eqref{finallemanter}}{\le} 20 \varepsilon \sum_{t=1}^{T-1} \frac{M_t^2}{\mu_t} + 2c \cdot \tau^P = \frac{\bar \varepsilon}{2} + 2c \cdot \tau^P.
    \]
\end{proof}

\vspace{0.2cm}

To obtain probabilistic guarantees in our complexity analysis, we require a lower bound on the conditional probability that event \( E^k \) holds, given the history of previous iterations. The next lemma ensures that this probability remains uniformly bounded below, which is crucial for controlling the expected number of successful iterations.

\begin{lemma}
    Consider the events \( E^k \) defined in \eqref{defEventEk}. Then, for all \( k \ge 2 \),
    \[
    \mathbb{P}\left(E^k \,\middle|\, \sigma \left(E^1, \ldots, E^{k-1} \right) \right) \ge p := \prod_{t=1}^{T-1} p_{t+1}
    \]
    where
    \begin{equation}
p_{t} = \min_{\tilde \xi_t \in \Xi_t} \mathbb{P}(\boldsymbol \xi_t = \tilde \xi_t).
    \end{equation}
\end{lemma}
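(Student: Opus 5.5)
The idea is to use the fact that each $e_t^k$ is the mean, over the next-stage noise, of a function whose value at the sampled noise is $\tilde e_t^k$; hence some realization always beats the mean. Write $\mathcal{H}^{k}_{t}$ for the $\sigma$-algebra generated by the history $\xi_{[t]}^{[k]}$ of \eqref{dependence}. Since $\mathcal{Q}_t=\mathbb{E}_{\xi_{t+1}}[\mathfrak{Q}_t(\cdot\,;\xi_{t+1})]$ and $\mathcal{V}_t^k=\mathbb{E}_{\xi_{t+1}}[V_t^k(\cdot\,;\xi_{t+1})]$, we have
\[
e_t^k=\mathbb{E}_{\omega\sim\mathbb{P}_{t+1}}\big[g_t^k(\omega)\big],\qquad g_t^k(\omega):=\mathfrak{Q}_t(x_t^k;\omega)-V_t^k(x_t^k;\omega),
\]
and $\tilde e_t^k=g_t^k(\xi_{t+1}^k)$. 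The function $g_t^k$ is determined by $x_t^k$ and $\Gamma_{t+1}^k$. Because $\Xi_{t+1}$ is finite, there is some $\omega_{t+1}^{*}\in\Xi_{t+1}$ with $g_t^k(\omega_{t+1}^*)\ge e_t^k$. We choose it by a fixed deterministic tie-breaking rule, so that it is a measurable function of $(x_t^k,\Gamma_{t+1}^k)$. Consequently, whenever $\xi_{t+1}^k=\omega_{t+1}^*$, we get $e_t^k\le\tilde e_t^k$.

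Next we chain over stages $t=1,\dots,T-1$. Event $E^k$ contains the event $A^k:=\bigcap_{t=1}^{T-1}\{\xi_{t+1}^k=\omega_{t+1}^*\}$. Since the scenario components are sampled from $\mathbb{P}_{t+1}$ independently of the past (stagewise independence), we have
\[
\mathbb{P}\big(\xi_{t+1}^k=\omega_{t+1}^*\,\big|\,\mathcal{H}_t^k\big)\ge p_{t+1},
\]
provided $\omega_{t+1}^*$ is $\mathcal{H}_t^k$-measurable. Applying the tower property from $t=T-1$ down to $t=1$ gives $\mathbb{P}(A^k\,|\,\mathcal{H}_1^k)\ge\prod_{t=1}^{T-1}p_{t+1}=p$. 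Finally, $\sigma(E^1,\dots,E^{k-1})\subseteq\mathcal{H}_1^k$, because each $E^j$ with $j<k$ is a function of iterates and scenarios up to iteration $j$. Conditioning once more therefore yields the claim.

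\textbf{Main obstacle.} The delicate step is the measurability requirement: that $x_t^k$ and $\Gamma_{t+1}^k$ are determined before $\xi_{t+1}^k$ is drawn. In BSDDP the scenario $\zeta^k$ is sampled in Step 2 of iteration $k-1$, and it decides $\ell(k)$ and hence which cuts enter $\Gamma^k$. So the models, and thus $x_1^k$, may depend on the whole vector $\zeta^k$, including its later components $\xi_{t+1}^k$.

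I would first try to show that this dependence is harmless. One route is to condition on the full history up to iteration $k-1$, and then count, among the finitely many $\zeta\in\Xi_2\times\dots\times\Xi_T$, the scenarios for which $E^k$ holds. For each stage $t$, the selection of $\omega_{t+1}^*$ would be made using the model corresponding to the candidate scenario. One then shows that the set of favourable scenarios has probability at least $p$, for example by exhibiting at least one favourable scenario, whose probability is then at least $\prod_t p_{t+1}$.

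If that fails, the fallback is to adopt the convention implicit in \eqref{dependence}: there $x_t^k$ depends only on $\xi_{[t]}^{[k]}$, so the cuts in $\Gamma^k$ are treated as fixed before stage-$(t+1)$ sampling. Under that convention the chaining argument above goes through directly.
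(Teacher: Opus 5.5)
Your chaining argument is the paper's. At each stage some atom of $\Xi_{t+1}$ lies at or above the mean of $g_t^k$, so the conditional probability is at least $p_{t+1}$. The tower property over the stages then gives $p$, and one coarsens to $\sigma(E^1,\dots,E^{k-1})$. The paper needs exactly the measurability you single out. It invokes independence of $\xi_{t+1}^k$ from $\xi_{[t]}^{[k]}$ while treating $x_t^k$ and $V_t^k(x_t^k;\cdot)$ as functions of $\xi_{[t]}^{[k]}$, i.e.\ it relies on \eqref{dependence} without further argument. Your diagnosis is right. The cuts forming $\Gamma^k$ are computed in Step 2 of iteration $k-1$ at $x^{\ell(k)}$, with $\ell(k)=\mathcal{D}(\zeta^k)$. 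Hence $x_t^k$ and $g_t^k$ depend on all of $\zeta^k$, including $\xi_{t+1}^k$, and \eqref{dependence} is false for BSDDP as written.

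Your route 1 cannot be completed: without that measurability the statement itself fails, and no favourable scenario need exist. Here is a counterexample.
\begin{itemize}
\item[(a)] Take $T=3$, with $X_1,F_1$ arbitrary, $X_2=[-2,2]$ and $0\in X_3$. Let $\xi_2$ be uniform on $\{a_1,a_2\}$ and $\Xi_3=\{b\}$, so $p=\tfrac12$.
\item[(b)] Set $F_2(u_2,x_1;a_i)=\tfrac12(u_2-c_i)^2$ with $c_1=-1$, $c_2=1$, and $F_3(u_3,x_2;b)=\tfrac12u_3^2+\phi(x_2)$, where $\phi$ is a Huber function equal to $\tfrac12x^2$ on $X_2$. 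Start from $\Gamma_t^1\equiv0$.
\item[(c)] Then $\mathcal{Q}_2=\phi$ on $X_2$, $\mathfrak{Q}_1(\cdot\,;a_i)\equiv\tfrac14$, and $e_2^k=\tilde e_2^k=0$. Also $V_1^1(\cdot\,;a_i)\equiv0$, so $E^1$ holds surely.
\item[(d)] Suppose $\Gamma_2^{k-1}\equiv0$ and the drawn scenario $(a_i,b)$ is already stored with stage-2 trial point $c_i$. The backward pass adds the tangent of $\phi$ at $c_i$. This gives $V_1^k(\cdot\,;a_i)\equiv\tfrac18$ and $V_1^k(\cdot\,;a_j)\equiv0$, hence $\tilde e_1^k=\tfrac18<\tfrac{3}{16}=e_1^k$.
\item[(e)] Consequently $E^2=\{\zeta^2\neq\zeta^1\}$. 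On $E^2$ no cut has yet been added, and the two stored trial points are $c_1$ and $c_2$. So $E^3$ fails for both possible draws of $\zeta^3$, and $\mathbb{P}(E^3\mid E^1\cap E^2)=0<p$ on an event of probability $\tfrac12$.
\end{itemize}

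Your fallback is therefore not a harmless convention but an extra hypothesis. It amounts to a modification of BSDDP in which $\Gamma^k$ is fixed before $\zeta^k$ is drawn. Such a modification must still supply the cut of $\mathcal{V}^k$ at $x^{\ell(k)}$, since the later lemmas rely on it. One way would be to refresh, in every backward pass, the cuts at $x^{\mathcal{D}(\zeta)}$ for all stored $\zeta$. Under such a hypothesis your proof is complete and coincides with the paper's; for the algorithm as stated, neither proof is valid.
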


\begin{proof}
    Let us define the individual stagewise events
    \[
    E_t^k := \left\{ \tilde e_t^k \ge e_t^k \right\}.
    \]
    Since \( \boldsymbol \xi_{t+1}^k \) is independent of the sample history \( \boldsymbol \xi_{[t]}^{[k]} \), we can write:
    \[
    \mathbb{P}\left( E_t^k \,\middle|\, \sigma \left( \boldsymbol \xi_{[t]}^{[k]} \right) \right) = 
    \mathbb{P} \left( \left[ Q_t(x_t^k; \cdot) - V_t^k(x_t^k; \cdot) \right] (\boldsymbol \xi_{t+1}^k) \ge \left[ \mathcal{Q}_t - \mathcal{V}_t^k \right]\left(x_t^k \right) \,\middle|\, \sigma \left( \boldsymbol \xi_{[t]}^{[k]} \right) \right) \ge p_{t+1}.
    \]
    This implies that, conditioned on the past sample paths, each individual event \( E_t^k \) occurs with probability at least \( p_{t+1} \). Consequently, we can bound the probability of their joint occurrence:
    \begin{align*}
        \mathbb{P}\left( \bigcap_{t=1}^{T-1} E_t^k \,\middle|\, \sigma \left( \boldsymbol \xi_{[T]}^{[k-1]} \right) \right)
        &= \prod_{t=1}^{T-1} \mathbb{P}\left( E_t^k \,\middle|\, \sigma \left( \boldsymbol \xi_{[T]}^{[k-1]}, \bigcap_{j=1}^{t-1} E_j^k \right) \right) \\
        &= \prod_{t=1}^{T-1} \mathbb{E}\left[ \mathbb{P}\left( E_t^k \,\middle|\, \sigma \left( \boldsymbol \xi_{[t]}^{[k]} \right) \right) \,\middle|\, \sigma \left( \boldsymbol \xi_{[T]}^{[k-1]}, \bigcap_{j=1}^{t-1} E_j^k \right) \right] \\
        &\ge \prod_{t=1}^{T-1} p_{t+1}.
    \end{align*}
    Since \( \sigma(E^1, \ldots, E^{k-1}) \subset \sigma( \boldsymbol \xi_{[T]}^{[k-1]} ) \), the desired result follows from the law of total probability.
\end{proof}

\vspace{0.2cm}

To quantify the number of BSDDP iterations required to achieve a given accuracy level, we must analyze the number of times certain favorable events \( E^k \) occur. The lemma below provides a probabilistic bound for the stopping time \( K \), defined as the first iteration at which at least \( R \) such events have happened. The result shows that if each event has a uniform lower bound on its conditional probability of occurring, then the expected stopping time is finite and controlled. Additionally, a high-probability tail bound is provided to guarantee concentration around the expected value.

\begin{lemma}\label{lemRuns}
Let $(E^k)_{k \ge 1}$ be a sequence of events in a probability space $(\Omega, \mathcal{F}, \mathbb{P})$ such that for all $k \ge 1$,
\[
\mathbb{P}(E^{k+1} \mid \mathcal{F}_k) \ge p \quad \text{a.s.,}
\]
where $\mathcal{F}_k := \sigma(E^1, \dots, E^k)$ and $p \in (0,1]$ is fixed. For a nonnegative integer $R$, define 
$$K := \inf\{n \ge R : \text{all events } E^{n-R+1}, \dots, E^n \text{ occur}\},$$
to be the first index for which $R$ consecutive events have occurred. Then:
\begin{enumerate}
    \item The expectation of $K$ is bounded by
    $$
    \mathbb{E}[K] \le \sum_{i=1}^R \frac{1}{p^i}.
    $$

    \item For any $n \ge R$, the tail probability is bounded by
    $$
    \mathbb{P}(K > n) \le (1 - p^R)^{\lfloor n/R \rfloor}.
    $$
\end{enumerate}
\end{lemma}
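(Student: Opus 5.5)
We will treat this as the classical ``waiting time for a run of $R$ successes'' problem, run under a conditional success probability of at least $p$ rather than i.i.d. trials. The case $R=0$ is trivial ($K=0$), so we assume $R\ge 1$.

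\emph{Item 1.} For $0\le j\le R$, let $T_j$ denote the first index at which the current run of consecutive occurrences has length $j$, so that $T_0=0$ and $K=T_R$. We will show by induction on $j$ that $\mathbb{E}[T_j]\le \sum_{i=1}^j p^{-i}$, using the decomposition $\mathbb{E}[T_{j}] = \mathbb{E}[T_{j-1}] + \mathbb{E}[T_j - T_{j-1}]$.

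At time $T_{j-1}$ the next trial either succeeds, giving $T_j=T_{j-1}+1$, or fails. After a failure the run restarts from $0$, and another wait of length distributed like $T_j$ (in the stochastically dominated sense) is needed. This gives the renewal-type inequality
\[
\mathbb{E}[T_j-T_{j-1}\mid \mathcal F_{T_{j-1}}]\le 1+(1-p)\,\overline{m}_j ,
\]
where $\overline{m}_j$ is a uniform upper bound on the conditional expected time to reach a run of $j$, starting afresh from any history.

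To make this uniform over histories, we define $m_j:=\sup$ over stopping times $S$ of $\operatorname{ess\,sup}\mathbb{E}[\text{time after } S \text{ to get } j \text{ consecutive successes}\mid\mathcal F_S]$. From the inequality above we derive $m_j\le m_{j-1}+1+(1-p)m_j$, which gives $m_j\le (m_{j-1}+1)/p$. With $m_0=0$, this recursion yields $m_j\le \sum_{i=1}^j p^{-i}$, exactly the claimed bound.

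The main obstacle is rigor in the non-i.i.d. setting, in two places. First, we must justify that conditioning on $\mathcal F_S$ at a stopping time preserves the bound $\mathbb{P}(E^{S+1}\mid\mathcal F_S)\ge p$. This follows by decomposing over $\{S=k\}$ and using $\mathbb{P}(E^{k+1}\mid\mathcal F_k)\ge p$. Second, we must ensure $m_j<\infty$ before solving the recursion, since otherwise $m_j\le m_{j-1}+1+(1-p)m_j$ cannot be rearranged. We will get finiteness from Item 2, which gives a geometric tail, or equivalently by working with $\min(T_j,n)$ and letting $n\to\infty$ by monotone convergence.

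A cleaner alternative would be a coupling argument. We would construct i.i.d. Bernoulli$(p)$ variables $B^k\le \mathbf 1_{E^k}$, using an auxiliary uniform variable $U_k$ with $B^k=\mathbf 1\{U_k\le p/\mathbb{P}(E^k\mid\mathcal F_{k-1})\}\mathbf 1_{E^k}$. Then $K$ is dominated by the i.i.d. run time, whose expectation is exactly $\sum_{i=1}^R p^{-i}$ by the standard first-step analysis.

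\emph{Item 2.} We split $\{1,\dots,n\}$ into $\lfloor n/R\rfloor$ disjoint blocks of length $R$. The event $K>n$ implies that no block consists entirely of occurrences. For each block starting after index $b$, iterated conditioning gives
\[
\mathbb{P}(\text{all of block occur}\mid\mathcal F_b)\ge p^R .
\]
Multiplying the complementary probabilities $\le 1-p^R$ successively via the tower property over the blocks yields $\mathbb{P}(K>n)\le(1-p^R)^{\lfloor n/R\rfloor}$. The first block requires $\mathbb{P}(E^1)\ge p$ as well; if the hypothesis only covers $k\ge1$ for $E^{k+1}$, we instead start the blocks at index $2$. This costs at most a harmless shift, and should be adjusted to match the statement.
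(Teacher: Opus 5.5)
Your proposal is correct. Your Item 2 is the same block argument the paper uses. For Item 1, the paper views the run length $L_k$ as a Markov chain and lets $\mu_j$ be the expected time to reach $R$ from state $j$. It writes $\mu_j=1+\mathbb{P}(E^{k+1}\mid\mathcal{F}_k)\mu_{j+1}+(1-\mathbb{P}(E^{k+1}\mid\mathcal{F}_k))\mu_0$, uses $\mu_{j+1}\le\mu_0$ to replace the conditional probability by $p$, and telescopes $(\mu_j-\mu_0)\le 1+p(\mu_{j+1}-\mu_0)$. With $m_j=\mu_0-\mu_j$ this is exactly your recursion $m_j\le (m_{j-1}+1)/p$. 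So your route A is the same computation, organized around the time to build a run of length $j$ rather than the time to absorption from state $j$.

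What your version adds is rigor. When the $E^k$ are dependent, $L_k$ is not Markov, and the paper's relation mixes the random quantity $\mathbb{P}(E^{k+1}\mid\mathcal{F}_k)$ with deterministic $\mu_j$'s; it also tacitly uses $\mu_0<\infty$. Three pieces of your argument close exactly these holes: the essential supremum over stopping times, the bound $\mathbb{P}(E^{S+1}\mid\mathcal{F}_S)\ge p$, and finiteness from a conditional version of the Item 2 tail (which gives $m_j\le j/p^j$).

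Your coupling alternative is genuinely different and the cleanest. Since $B^k\le \mathbf{1}_{E^k}$ with the $B^k$ i.i.d. Bernoulli$(p)$, you get $K\le K_B$ pathwise. Both items then follow at once from the i.i.d. case, at the harmless cost of enlarging the probability space with independent uniforms.

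One correction to your final remark on $E^1$: a shift cannot make the argument match the statement, because the statement as literally written (hypothesis only on $E^{k+1}$ for $k\ge 1$) is false. Take $R=1$, $E^1=\emptyset$, and $E^k$ for $k\ge 2$ independent with probability $p$. Then $\mathbb{E}[K]=1+1/p$ and $\mathbb{P}(K>1)=1>1-p$. The hypothesis must be read with $k\ge 0$ and $\mathcal{F}_0$ trivial, i.e.\ $\mathbb{P}(E^1)\ge p$. The paper's proof uses this implicitly, both in starting the chain at $L_0=0$ and in bounding the first block $B_1$. You need it in Item 1 as well, not only in Item 2: the stopping time $S=0$ in route A and the ratio $p/\mathbb{P}(E^1\mid\mathcal{F}_0)$ in the coupling both rely on it. A shifted argument would only yield $\mathbb{E}[K]\le 1+\sum_{i=1}^R p^{-i}$ and $\mathbb{P}(K>n)\le (1-p^R)^{\lfloor (n-1)/R\rfloor}$.
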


\if{
-----------

\[
X_k = \mbox{\# of successfull consecutive strictly preceding $k$}
\]
Have
\[
X_0=0
\]
\[
\mu_R = \E(k | X_k = R-1, X_0=0) = \sum_{k=R-1}^\infty k P_{0,R-1}^k
\]
\[
P(X_{k+1}=j+1| X_k =j ) =p \quad
P(X_{k+1}=0| X_k =j ) =1-p
\]
\[
P^{k+1}_{0,j+1} = P( X_{k+1}=j+1 | X_0 =0)
\]
\[
P^{k+1}_{0,j+1} =
\sum_i P(X_{k+1}=j+1 | X_k =i) P(X_k=i | X_0=0) =
p P(X_k=j | X_0=0) = p P^k_{0,j}
\]
\begin{align*}
    P^{k+1}_{0,0} &= P (X_{k+1}=0 | X_0=0) =
\sum_i P(X_{k+1}=0 | X_k =i)P(X_k=i | X_0=0) \\
&=
(1-p) \sum_{i>0} P(X_k=i | X_0=0) = (1-p)
\end{align*}

\begin{align*}
   (k+1) P^{k+1}_{0,j+1}
   = (k+1) p P^k_{0,j}
   = p ( k P^k_{0,j}) + p P^k_{0,j}
\end{align*}
For any $j \ge 1$, have:
\[
\mu_{j+1} = \sum_{k=j-1}^\infty 
(k+1) P^{k+1}_{0,j}
= p \sum_{k=j-1}^\infty ( k P^k_{0,j-1}) + p
= p \mu_j + p
\]
\[
\mu_1 =  \sum_{k=-1}^\infty 
(k+1) P^{k+1}_{0,0} =(1-p) 
\]
\[
\mu_1=p \mu_0 + p
\]
\[
\mu_2 = p \mu_1 + p =
p (p \mu_0+p) + p =
p^2 \mu_0 + p^2 + p
\]
\[
\mu_R = p^R \mu_0 + p^R + \ldots +p = p^R \mu_0 +
p (1+\ldots+ p^{R-1}) =

\]

--------------
}\fi

\begin{proof}
\begin{enumerate}
\item Let $L_k$ denote the number of consecutive occurrences of the events $E^i$ ending at time $k$. Then $(L_k)_{k \ge 0}$ defines a Markov chain on the state space $\{0, 1, \dots, R\}$ with absorbing state $R$. Define $\mu_j$ as the expected time to reach state $R$ starting from $L_0 = j$. Our goal is to bound $\mu_0 = \mathbb{E}[K]$, with terminal condition $\mu_R = 0$.

    For $j \in \{0, \dots, R-1\}$, the expected time follows the recurrence:
    $$\mu_j = 1 + \mathbb{P}(E^{k+1}|\mathcal{F}_k) \mu_{j+1} + (1-\mathbb{P}(E^{k+1}|\mathcal{F}_k))\mu_0.$$
    Since $\mathbb{P}(E^{k+1}|\mathcal{F}_k) \ge p$ and $\mu_{j+1} \le \mu_j \le \mu_0$, we obtain
    $$\mu_j \le 1 + p \mu_{j+1} + (1-p)\mu_0 \iff (\mu_j-\mu_0) \le 1 + p(\mu_{j+1} - \mu_0).$$
    Therefore, for $j=0,\ldots,R-1$, we have
$$
p^j(\mu_j-\mu_0) \leq p^j + p^{j+1}(\mu_{j+1}-\mu_0).
$$
Summing the above inequalities for $j=0,\ldots,R-1$, we obtain
    $$p^0 (\mu_0 - \mu_0) - p^R(\mu_R-\mu_0) \le 1+p+\ldots+p^{R-1}.$$
    Recalling that $\mu_R=0$, it follows that the
    expectation $\mathbb{E}[K]$ is bounded by
    $$\mathbb{E}[K] = \mu_0 \le \frac{1+p+\ldots+p^{R-1}}{p^R}=\sum_{j=1}^R \frac{1}{p^j}.$$
\item Define disjoint blocks of size $R$ by
$$
B_i := \bigcap_{k=(i-1)R+1}^{iR} E^k, \quad i \in \mathbb{N},
$$
and let $N := \inf\{i \ge 1 : B_i \text{ occurs}\}$ be the index of the first successful block. The event $\{N > m\}$ means that all of the first $m$ blocks failed, i.e., $\bigcap_{i=1}^m B_i^c$. Observe that
$$
\begin{aligned}
\mathbb{P}(N > m+1) &= \mathbb{P}\left(\bigcap_{i=1}^{m+1} B_i^c\right) \\
&= \mathbb{E}\left[\mathbf{1}_{\cap_{i=1}^m B_i^c} \cdot \mathbb{P}\left(B_{m+1}^c \mid \sigma(E^1,\dots,E^{mR})\right)\right].
\end{aligned}
$$
Since
$$
\mathbb{P}\left(B_{m+1} \mid \sigma(E^1,\dots,E^{mR})\right) \ge p^R \quad \text{a.s.},
$$
it follows that $\mathbb{P}\left(B_{m+1}^c \mid \sigma(E^1,\dots,E^{mR})\right) \le 1 - p^R$, and hence
$$
\mathbb{P}(N > m+1) \le (1-p^R)\,\mathbb{P}(N > m).
$$
Iterating this relation gives
$$
\mathbb{P}(N > m) \le (1-p^R)^m.
$$
Now set $m = \lfloor n/R \rfloor$. The event $\{K > n\}$ implies that none of the first $m$ blocks succeeded, i.e., $\{K > n\} \subseteq \bigcap_{i=1}^m B_i^c$. Therefore,
$$
\mathbb{P}(K > n) \le \mathbb{P}\left(\bigcap_{i=1}^m B_i^c\right) \le (1-p^R)^m = (1-p^R)^{\lfloor n/R \rfloor}.
$$
\end{enumerate}
\end{proof}

The result below establishes a complexity bound for the BSDDP algorithm under strong convexity assumptions. It provides a non-asymptotic guarantee on the number of iterations required to reach a prescribed accuracy level at the first stage. To the best of our knowledge, this is the first result offering an explicit iteration bound for BSDDP in the strongly convex setting, with a closed-form dependence on the model's curvature parameters, the sample space, and the desired accuracy. This theorem also offers insights into how problem structure and sampling probabilities influence convergence speed, which may inform both algorithm design and parameter tuning in practice.

\begin{theorem}[Complexity of BSDDP for strongly convex problems] \label{complexitysddp}
Let $K$ denote the first iteration index of BSDDP satisfying
 \(e_0^K \le \bar \varepsilon.\)
    % Let $K$ denote the number of iterations performed by BSDDP, with parameter $\tau_0=\tau$ defined in $\eqref{defTau}$, before it finds an index $k$ with \(e_0^k \le \bar \varepsilon.\)
    Then we have 
    \begin{equation}
    \mathbb{E}[K] \le \sum_{i=1}^R \frac{1}{p^i}, \quad \text{for} \quad R = \displaystyle P \cdot \prod_{t=2}^T N_t + 1,\end{equation} 
    where 
    \begin{equation*}
    P= \left\lceil\left(
        1+ \left[ \frac{40}{\bar \varepsilon} \sum_{t=1}^{T-1} \frac{M_t^2}{\mu_t} \right]^{2^{T-1}-1}
    \right) \log\left( \frac{4}{\bar \varepsilon} \sum_{t=1}^{T-1} M_t D_t \right) \right \rceil, \quad p = \prod_{t=2}^{T} \min_{\tilde \xi_{t} \in \Xi_{t}} \mathbb{P}(\boldsymbol \xi_{t}=\tilde \xi_t).
    \end{equation*}
    In addition, for any $n \ge 1$, we have
    \begin{equation}\label{tailbound}
    \mathbb{P}(K > n) \le (1 - p^R)^{\lfloor n/R \rfloor}.
    \end{equation}
\end{theorem}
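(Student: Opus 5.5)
The plan is to reduce the theorem to Lemma \ref{lemRuns} by showing a deterministic fact. Fix $\tau_0=\tau$ as in \eqref{defTau}. Whenever $R$ consecutive events $E^{n-R+1},\dots,E^n$ occur, some index $k\le n$ must satisfy $e_0^k\le\bar\varepsilon$. Once this holds, the first hitting index $K$ of $\{e_0^k\le\bar\varepsilon\}$ is at most the first time a run of $R$ consecutive events $E^k$ completes. Part 1 of Lemma \ref{lemRuns} then gives $\mathbb{E}[K]\le\sum_{i=1}^R p^{-i}$, and part 2 gives the tail bound \eqref{tailbound}. The conditional lower bound $\mathbb{P}(E^{k+1}\mid\sigma(E^1,\dots,E^k))\ge p$ needed there is the preceding lemma. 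For $n<R$ the bound \eqref{tailbound} is trivial since the exponent is $0$.

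For the deterministic step I would use the pigeonhole principle. Consider a window of $R=P\prod_{t=2}^TN_t+1$ consecutive iterations $n-R+1,\dots,n$ on which every $E^k$ holds. There are at most $\prod_{t=2}^T N_t$ distinct scenarios $\zeta=(\xi_2,\dots,\xi_T)$. Hence among the last $R-1=P\prod N_t$ iterations of the window, some scenario $\zeta$ is sampled at least $P$ times; together with the first index of the window I expect to obtain $P+1$ occurrences. I would be careful here: what is really needed is $P+1$ indices $k_1<\dots<k_{P+1}$ in the window with the same scenario. Then, since $\mathcal D$ always stores the most recent visit, $\ell(k_{i})=k_{i-1}<k_i$ for $i\ge2$, and $E^{k_i}$ holds for all $i$ because they lie in the window. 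The extra $+1$ in $R$ is precisely to ensure $P+1$ repeats. If the pigeonhole count only gives $P$, I would slightly adjust the bookkeeping, e.g.\ by taking the window of length $R$ and noting $\lceil R/\prod N_t\rceil\ge P+1$.

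Lemma \ref{lemPHP} then gives $e_0^{k_{P+1}}\le \bar\varepsilon/2+2c\,\tau^P$ with $c=2\sum_t M_tD_t$. It remains to choose $P$ so that $2c\tau^P\le\bar\varepsilon/2$, i.e.\ $P\ge \log(4c/\bar\varepsilon)/\log(1/\tau)$. Write $1/\tau=1+\varepsilon^{2^{T-1}-1}$ and use $\log(1+x)\ge x/(1+x)$. This gives
$$1/\log(1/\tau)\le 1+\varepsilon^{-(2^{T-1}-1)} = 1+\Big[\tfrac{40}{\bar\varepsilon}\textstyle\sum_t M_t^2/\mu_t\Big]^{2^{T-1}-1},$$
so the stated $P$ suffices, since $4c/\bar\varepsilon=\frac{8}{\bar\varepsilon}\sum M_tD_t$. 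There is a factor-$2$ discrepancy with the $\frac{4}{\bar\varepsilon}$ in the statement. I would reconcile it either by using the sharper bound $r^1\le 2\sup_t w_t^1\le c$ (each $w_t^1\le \frac{c}{2}\cdot$const) or by accepting a constant adjustment. I expect this constant tracking to be the fiddly part.

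The main obstacle is the pigeonhole/indexing step. One must check that for the repeated scenario the consecutive occurrences $k_{i-1},k_i$ indeed satisfy $\ell(k_i)=k_{i-1}$. One must also check that the hypothesis of Lemma \ref{lemPHP} only requires $E^{k_i}$ for $i\ge2$, which the window guarantees. A further point is that $e_0^{k_{P+1}}\le\bar\varepsilon$ with $k_{P+1}\le n$ yields $K\le n$, so that $\{K>n\}$ is contained in the event that no run of length $R$ has completed by $n$. Finally, one should verify the standing requirement $\varepsilon<1/2$ in \eqref{defTau}. It holds when $\bar\varepsilon<20\sum M_t^2/\mu_t$; otherwise a trivial case argument applies, because $e_0^k$ is already small.
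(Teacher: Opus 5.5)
Your proposal follows the paper's proof step for step. It reduces to Lemma~\ref{lemRuns} via runs of $R$ consecutive events $E^k$. It uses pigeonhole over the $\prod_t N_t$ scenarios to get $P+1$ consecutive visits $k_1<\dots<k_{P+1}$ of one scenario, with $\ell(k_i)=k_{i-1}$ because $\mathcal{D}$ stores the most recent visit. It then applies Lemma~\ref{lemPHP} and uses $\log(1/\tau)\ge 1-\tau$; your $\log(1+x)\ge x/(1+x)$ is the same inequality as the paper's $\tau_0\le e^{\tau_0-1}$. In the pigeonhole step, keep only the direct count $\lceil (P\prod_t N_t+1)/\prod_t N_t\rceil=P+1$. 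Drop the ``first index of the window'' idea, since that index need not carry the repeated scenario.

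Two side points need correcting.

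\textbf{The factor $2$.} The discrepancy you found is genuine, and the paper's proof contains the same slip. It asserts $P\ge\frac{1}{1-\tau_0}\log(4c/\bar\varepsilon)$, but with $c=2\sum_t M_tD_t$ this is $\log\bigl(\frac{8}{\bar\varepsilon}\sum_t M_tD_t\bigr)$, which the stated $P$ does not dominate. Your first proposed repair is not available. Lemma~\ref{lemPHP} only gives $\sup_t w_t^1\le c$, while $\sum_t\theta_t>1$, so $r^1\le c$ does not follow. The honest conclusion is that the argument proves the theorem with $8$ in place of $4$ inside the logarithm. Equivalently, with the stated $P$ you only get $e_0^{k_{P+1}}\le\frac{3}{2}\bar\varepsilon$.

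\textbf{The case $\varepsilon\ge 1/2$.} The case $\bar\varepsilon\ge 20\sum_t M_t^2/\mu_t$ is not trivial. The quantity $e_0^k$ depends on the initial cuts, not on $\sum_t M_t^2/\mu_t$. Replacing $\Gamma_1^1$ by $\Gamma_1^1-C$ is still a valid input, and it forces $e_0^1\ge C$ for any $C>0$. The paper simply takes $\varepsilon<1/2$ as a standing hypothesis; it is written into the statement of Lemma~\ref{lemPHP}. You should do the same rather than claim the complementary case is automatic.
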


\if{
-------------

\[
\E[K] \le \frac1p 
\left(\frac{p^{-R}-1}{p^{-1}-1} \right)
= \left(\frac{p^{-R}-1}{1-p} \right) \le
\frac{1}{(1-p)p^R}
\]
--------------

}\fi

\begin{proof}
    Let $L$ denote the number of iterations required for the event $E^k$ (as defined in \eqref{defEventEk}) to occur 
    \[
    R := P \cdot \prod_{t=2}^T N_t + 1
    \]
    consecutive times. By Lemma~\ref{lemRuns}, we have 
    \(\mathbb{E}[L] \le \sum_{i=1}^R 1/p^{i}.\)

    Let $S \subset \mathbb{N}$ denote the set of $R$ consecutive iterations in which the event $E^k$ occurs, i.e., $S = \{k - R + 1, \ldots, k\}$ for some stopping index $k = L$. Since the joint realization $(\xi_2^k, \ldots, \xi_T^k)$ takes at most $\prod_{t=2}^T N_t$ distinct values, and $|S| = R = P \cdot \prod_{t=2}^T N_t + 1$, the pigeonhole principle implies that some realization $\zeta$ must occur at least $P+1$ times within $S$.
    
    Let $A := \{k_1, \ldots, k_{P+1}\} \subset S$ be the ordered set of indices for which $(\xi_2^{k_i}, \ldots, \xi_T^{k_i}) = \zeta$
    for all
    $i=1,\ldots,P+1$. Since the indices are consecutive and the realization is repeated, the algorithm ensures that for each $i \ge 2$, we have $k_{i-1} = \ell(k_i) < k_i$, by the dictionary lookup mechanism in Step 2 of BSDDP.
    
    Applying Lemma~\ref{lemPHP} to this subset \(A\), we obtain
    \[
    \min_{j \le L} e_0^j \le e_0^{k_{P+1}} \le \frac{\bar \varepsilon}{2} + 2c \cdot \tau_0^P.
    \]
    Recalling the expression for $\tau_0$, we note that
    \[
    \frac{1}{1 - \tau_0} = 1 + \varepsilon^{-(2^{T-1}-1)} = 1 + \left[ \frac{40}{\bar \varepsilon} \sum_{t=1}^{T-1} \frac{M_t^2}{\mu_t} \right]^{2^{T-1}-1},
    \]
    thus 
    \[
    P \ge \frac{1}{1 - \tau_0} \log\left( \frac{4c}{\bar \varepsilon} \right) \implies (\tau_0 - 1)P \le \log\bigg( \frac{\bar \varepsilon}{4c} \bigg) \implies 2c \cdot e^{(\tau_0 - 1)P} \le \frac{\bar \varepsilon}{2}.
    \]
    By the standard exponential bound $\tau_0 \le e^{\tau_0 - 1}$, this ensures that $\displaystyle \min_{j \le L} e_0^j \le \bar \varepsilon$, which shows that an index \(k \le L\) satisfies the desired accuracy, i.e., \(K \le L\)
    and
    \(\mathbb{E}[K] \leq \mathbb{E}[L] \le \sum_{i=1}^R 1/p^{i}.\)

    The tail bound
\eqref{tailbound}
    follows directly from the second part of Lemma~\ref{lemRuns}.
\end{proof}

\vspace{0.2cm}

\begin{rem}
    Theorem \ref{complexitysddp} provides an explicit non-asymptotic upper bound on the number of BSDDP iterations required to reach a prescribed target accuracy $\bar \varepsilon$. The bound depends only on problem-specific quantities (Lipschitz and strong convexity constants) and the desired accuracy, but not on the state-space dimension. This shows the algorithm's scalability and theoretical efficiency for structured multistage problems.
\end{rem}

\begin{rem}
    If the sampling distributions are uniform over $\Xi_t$ for all stage $t$, as in \cite{lan2020}, then the lower bound $p$ becomes $p = \left( \prod_{t=1}^{T-1} N_{t+1}^{-1} \right)$. In this case, the complexity bound simplifies to
    \[
    \mathbb{E}[K] \le 
\sum_{i=1}^R    \left(\prod_{t=2}^T N_t \right)^i 
    \le
    \left( 1+  P  \prod_{t=2}^T N_t \right) \left( \prod_{t=2}^T N_t \right)^R,
    \]
    which asymptotically matches the complexity $1+P$ of DDP given in  
    \cite{complddpscpb}
    when each stage has a single scenario, i.e., $N_t = 1$ for all $t$.
\end{rem}

\section{Conclusion}

In this paper, we proposed a variant of SDDP called
BSDDP (Bidirectional SDDP) to solve strongly convex
multistage stochastic programs. We proved a complexity
bound for this solution method independent on the dimension
of the state vectors which improves the previous complexity
bound obtained in \cite{lan2020} for SDDP.
The bound also decreases as the constants of strong convexity
increase.

It would be interesting to extend this analysis to the study
of the complexity of other variants of SDDP such as 
\cite{baucke18} or incorporating inexactness
in the solutions of the subproblems
along iterations as in \cite{guigues2016isddp}.

\bibliographystyle{plain}
\bibliography{ref}

\section{Appendix}

We collect in this appendix two simple technical lemmas of convex analysis.

\begin{lemma}\label{subgradValueFunc1}
Let
$G: \mathbb{R}^m \times \mathbb{R}^n \rightarrow \mathbb{R}$ be a convex
function and let $Y$ be a convex
compact set. Then, the value function 
 defined as
\begin{equation}\label{pbdefsc0}
            \V(x) := \min_{y\in Y} \, G(y,x) \quad \forall \ x \in \R^n
\end{equation}
is convex, finite everywhere, and $\partial \V(x) \ne \emptyset$ for every $x \in \R^n$.
Moreover,  the following three conditions are equivalent:
\begin{itemize}
    \item[a)] 
    $s \in \partial \V(x)$;
    \item[b)] there exists an optimal solution $y_x$ of \eqref{pbdefsc0} such that $(0,s) \in \partial G(y_x,x)$, in which case
\[
s \in \partial_x G(y_x,x);
\]
\item[c)]
$s$
is a Lagrange multiplier for
the constraint $z=x$ in the following reformulation of
the value function $\V$:
\begin{align}\label{pbdefsc1}
            \V(x) = \min \{ G(y,z): y \in Y,\; z=x\}.
            \end{align}
\end{itemize}
\end{lemma}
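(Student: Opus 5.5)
We need to prove Lemma subgradValueFunc1, the appendix lemma: $\V(x)=\min_{y\in Y} G(y,x)$ with $G$ convex and finite on $\R^m\times\R^n$ and $Y$ convex compact. We must show that $\V$ is convex, finite everywhere and subdifferentiable, and that conditions a), b), c) are equivalent.

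\textbf{Convexity, finiteness, subdifferentiability.} For each $x$ the function $G(\cdot,x)$ is finite and convex on $\R^m$, hence continuous. Since $Y$ is compact, the minimum in \eqref{pbdefsc0} is attained and finite.

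For convexity, take $x_1,x_2$ with minimizers $y_1,y_2$ and $\lambda\in[0,1]$. The point $\lambda y_1+(1-\lambda)y_2$ lies in $Y$, so joint convexity of $G$ gives
\[
\V(\lambda x_1+(1-\lambda)x_2)\le \lambda \V(x_1)+(1-\lambda)\V(x_2).
\]
A finite convex function on all of $\R^n$ is continuous and has a nonempty subdifferential at every point (every point is interior to the domain). This gives the first part.

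\textbf{a) $\Leftrightarrow$ b).} Write $\V(x)=\min_y \{G(y,x)+\delta_Y(y)\}$, where $\delta_Y$ is the indicator of $Y$, and set $\Phi(y,x):=G(y,x)+\delta_Y(y)$. The key identity is
\[
s\in\partial\V(x)\iff (0,s)\in\partial\Phi(y_x,x) \quad\text{for some (equivalently any) minimizer } y_x.
\]

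\emph{Proof of the identity.} The inequality $(0,s)\in\partial\Phi(y_x,x)$ reads $\Phi(y,x')\ge \V(x)+\langle s,x'-x\rangle$ for all $(y,x')$. Minimizing over $y$ shows this is exactly $s\in\partial\V(x)$. Conversely, if $s\in\partial\V(x)$, then for all $(y,x')$,
\[
\Phi(y,x')\ge\V(x')\ge \V(x)+\langle s,x'-x\rangle=\Phi(y_x,x)+\langle 0,y-y_x\rangle+\langle s,x'-x\rangle,
\]
so the identity holds at any minimizer $y_x$.

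The statement of b) writes $\partial G$ rather than $\partial\Phi$. I would read it with the constraint $y\in Y$ built in, i.e.\ $G+\delta_Y$, since for $y_x$ in the interior of $Y$ the two coincide. This is the point to flag.

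\emph{Projecting onto the $x$-component.} If $(0,s)\in\partial\Phi(y_x,x)$, restrict the subgradient inequality to $y=y_x$. This gives $s\in\partial_x G(y_x,x)$, which is the "in which case" clause of b).

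\textbf{b) $\Leftrightarrow$ c).} Form the Lagrangian of \eqref{pbdefsc1},
\[
L(y,z;s)=G(y,z)+\langle s,x-z\rangle, \qquad y\in Y,\ z\in\R^n.
\]
By definition, $s$ is a Lagrange multiplier when $\inf_{y\in Y,z}L(y,z;s)=\V(x)$, i.e.\ there is no duality gap and the dual is attained at $s$. Writing this out:
\[
G(y,z)-\langle s,z-x\rangle\ge\V(x)=G(y_x,x)\quad \text{for all } y\in Y,\ z\in\R^n.
\]
This is precisely $(0,s)\in\partial\Phi(y_x,x)$, so b) and c) are equivalent. No constraint qualification is needed, because the multiplier statement is equivalent to the subgradient inequality directly rather than through strong duality. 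Equivalently, $s$ is a multiplier iff $s\in\partial v(0)$ for the perturbation function $v(u)=\V(x+u)$, which is a) again.

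\textbf{Main obstacle.} The only delicate point is the interpretation of b) in terms of $\partial G$ versus $\partial(G+\delta_Y)$ when $y_x$ lies on the boundary of $Y$. I would state the equivalence with $\partial(G+\delta_Y)$. By the sum rule, valid since $G$ is finite everywhere, this equals $\partial G(y_x,x)+N_Y(y_x)\times\{0\}$. The conclusion $s\in\partial_x G(y_x,x)$ then follows unchanged, and that is what the paper actually uses.
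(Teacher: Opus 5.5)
Your proof is correct, and it takes a more elementary route than the paper's. The paper's argument is a chain of citations: Bertsekas, Prop.~3.3.3 for convexity; Rockafellar's Theorem~9.3 on Lagrange multipliers for a)$\Leftrightarrow$c); and Theorem~24(a) of \emph{Conjugate Duality} for a)$\Leftrightarrow$b). It then uses the same ``set $y=y_x$'' step you use to get $s\in\partial_x G(y_x,x)$. You instead verify everything from the definitions. You organize the equivalences as a)$\Leftrightarrow$b)$\Leftrightarrow$c) around the single inequality $G(y,z)\ge \V(x)+\langle s,z-x\rangle$ for all $y\in Y$, $z\in\R^n$. This makes visible that, once a minimizer exists, no constraint qualification is involved: being a subgradient of $\V$ at $x$ and being a multiplier in the no-gap/saddle-point sense are both restatements of that inequality. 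The citation route is shorter and places the lemma within general perturbation duality. However, it hides precisely which function the cited theorems are applied to, and that is where your flag on b) matters. It points to a genuine imprecision in the statement. With $\partial G$ taken literally for $G$ defined on $\R^m\times\R^n$, a) does not imply b) when the constraint is active. For example, $G(y,x)=(y+1)^2$ and $Y=[0,1]$ give $\V\equiv 1$, $y_x=0$ and $0\in\partial\V(x)$, but $\partial G(0,x)=\{(2,0)\}$. Theorem~24(a) concerns the extended-valued function with the constraint absorbed, i.e.\ your $\Phi=G+\delta_Y$. Accordingly, the paper's rewriting ``for all $(y,z)$'' is valid only for $y\in Y$. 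As you say, the consequence $s\in\partial_x G(y_x,x)$, which is all that Lemma~\ref{subgradValueFunc2} uses, is unaffected. Two small things should be stated explicitly. First, that $Y\neq\emptyset$. Second, which notion of Lagrange multiplier you adopt; under the KKT-type notion $(0,s)\in\partial G(y_x,x)+N_Y(y_x)\times\{0\}$, your sum-rule remark shows it coincides with yours.
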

\begin{proof}
It follows from Proposition 3.3.3 of \cite{bertsekas2009convex} that $\V$ is a convex function. Since for every $x \in \R^{n}$, $G(\cdot,x)$ is continuous on the
compact set $Y$, function $\V$ is finite-valued
everywhere and therefore $\partial \V(x) \neq \emptyset$.

The equivalence between a) and c) follows from \cite[Theorem 9.3]{rockafellar1993lagrange}, as \eqref{pbdefsc0} has a feasible solution by compactness of $Y$ and $\V$ is lower semicontinuous since it is finite and convex.
Using Theorem 24(a) of \cite{rockafellar1974conjugate}, we also have
that a) is equivalent to the existence of
an optimal solution $y_x$  of \eqref{pbdefsc0} such that $(0,s)\in \partial G(y_x,x)$; equivalently, for all $(y,z)$,
$$
G(y,z)\ge G(y_x,x)+\left \langle 0,\,y-y_x\right \rangle+\left \langle s,\,z-x\right \rangle
= G(y_x,x)+\left \langle s,\,z-x\right \rangle.
$$
In particular, setting $y=y_x$ gives 
$$
G(y_x,z)\ge G(y_x,x)+\left \langle s,\,z-x\right \rangle
$$
for all $z$,
which is precisely the subgradient inequality characterizing $s\in \partial_x G(y_x,x)$.
\end{proof}

\begin{lemma}\label{subgradValueFunc2}
Let $X \subset \mathbb{R}^n$ 
and $Y \subset \mathbb{R}^m$
be convex compact sets and let
$F: \mathbb{R}^m \times \mathbb{R}^n \rightarrow \mathbb{R}$ and 
$\Gamma: \mathbb{R}^m \rightarrow \mathbb{R}$ be convex
functions.
 Then, the value function 
 defined as
\begin{equation}\label{pbdefsc2}
            \V(x) := \min_{y\in Y} \, \left\{G(y,x):=F(y,x) + \Gamma(y) \right\} \quad \forall \ x \in \R^n,
\end{equation}
is convex, finite everywhere, and $\partial \V(x) \ne \emptyset$ for every $x \in \R^n$.

If, in addition,
there exists $0 \leq M < +\infty$
such that
\begin{equation}\label{syx}
    \|s_x(y,x)\| \le M,
    \end{equation}
for every $(y,x) \in Y \times X$ 
and every subgradient 
$s_x(y,x) \in \partial_x F(y,x)$, 
    then $\V(\cdot)$
    restricted to $X$
        is $M$-Lipschitz continuous
        or equivalently $\|s\| \le M$ for every  $s \in \partial \V(x)$ and $x \in X$.
\end{lemma}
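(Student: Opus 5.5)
The statement to prove is Lemma \ref{subgradValueFunc2}. I would reduce it to Lemma \ref{subgradValueFunc1} applied to $G(y,x):=F(y,x)+\Gamma(y)$, which is convex as a sum of convex functions. That lemma immediately yields that $\V$ is convex, finite everywhere, and has nonempty subdifferential at every $x$. These are the first assertions.

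For the Lipschitz part, fix $x\in X$ and $s\in\partial\V(x)$. By Lemma \ref{subgradValueFunc1} (a)$\Rightarrow$(b), there is an optimal $y_x\in Y$ with $s\in\partial_x G(y_x,x)$. Since $\Gamma$ does not depend on $x$, we have $\partial_x G(y_x,x)=\partial_x F(y_x,x)$: the partial subdifferential is that of $z\mapsto F(y_x,z)+\Gamma(y_x)$, which differs from $F(y_x,\cdot)$ only by a constant. Because $(y_x,x)\in Y\times X$, hypothesis \eqref{syx} gives $\|s\|\le M$. This proves the bound on subgradients.

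It remains to pass from bounded subgradients to $M$-Lipschitz continuity on $X$, and to note the converse so that the two are indeed equivalent.

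\textbf{Bounded subgradients imply Lipschitz.} For $x,x'\in X$, take $s\in\partial\V(x)$. The subgradient inequality gives $\V(x)-\V(x')\le\langle s,x-x'\rangle\le M\|x-x'\|$. Swapping the roles of $x$ and $x'$ gives the reverse bound.

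\textbf{Lipschitz implies bounded subgradients.} This is the direction I expect to be the main obstacle, because $X$ may have empty interior or $x$ may lie on the boundary of $X$, so Lipschitz continuity on $X$ alone does not control subgradients in directions leaving $X$. I would not attempt this direction in general. The useful implication is the one established via \eqref{syx}, namely that every $s\in\partial\V(x)$ with $x\in X$ satisfies $\|s\|\le M$, and Lipschitz continuity follows from it.

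If a true equivalence is needed, I would read it for $x\in\mathrm{int}\,X$. There the standard argument works: take $z=x+\delta s/\|s\|\in X$ for small $\delta>0$, so that $\|s\|\delta\le\V(z)-\V(x)\le M\delta$, which forces $\|s\|\le M$.
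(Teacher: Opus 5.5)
Your argument is the paper's own: apply Lemma~\ref{subgradValueFunc1} to $G=F+\Gamma$, use $\partial_x G(y_x,x)=\partial_x F(y_x,x)$ to get $\|s\|\le M$ for every $s\in\partial\V(x)$ with $x\in X$, and obtain Lipschitz continuity on $X$ from the subgradient inequality with $x,x'$ swapped. Your refusal to prove the general converse is justified: the paper's step ``$\langle s,x'-x\rangle\le M\|x'-x\|$ for all $x'\in X$, hence $\|s\|\le M$'' only yields the bound when $x+\delta s/\|s\|\in X$ for some $\delta>0$ (for instance $X=Y=[0,1]$, $F(y,x)=\max\{x,-10x\}$, $\Gamma\equiv 0$ gives $\V$ that is $1$-Lipschitz on $X$ yet has $-10\in\partial\V(0)$), and the rest of the paper only uses the direction you proved.
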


\begin{proof} We have that $\V$ is convex, finite everywhere, and $\partial \V(x) \ne \emptyset$ for every $x \in \R^n$ by a direct application of Lemma \ref{subgradValueFunc1} to $G(y,x):=F(y,x) + \Gamma(y).$ 

For the second part, assume that $\|s_x(y,x)\| \le M$, for every $(y,x) \in Y \times X$ and every subgradient $s_x(y,x) \in \partial_x F(y,x) = \partial_x G(y,x)$. Thus, for any $x \in X$ and $s \in \partial \V(x),$ Lemma \ref{subgradValueFunc1} shows that $s \in \partial_x G(y_x,x)$ for some $y_x \in Y$, so that $\|s\| \le M.$ 

To show the last equivalence, we proceed as follows. Assume $\|s\|\le M$ for all $x\in X$ and $s\in\partial \V(x)$. Then for any $x,x'\in X$ and $s\in\partial \V(x)$,
\[\V\left(x'\right)-\V(x)\ge \left\langle s,x'-x\right \rangle \ge -\|s\|\,\left\|x'-x\right\|\ge -M\left\|x'-x\right\|.\] 
Switching $x,x'$ yields
$|\V(x')-\V(x)|\le M\|x'-x\|$, i.e., $\V$ is $M$-Lipschitz on $X$ with respect to \ $\|\cdot\|$.
Conversely, if $\V$ is $M$-Lipschitz on $X$ w.r.t.\ $\|\cdot\|$, then for any $x\in X$, $s\in\partial \V(x)$, and $x'\in X$,
\[\left\langle s,x'-x\right \rangle \le \V\left(x'\right)-\V(x)\le M\left\|x'-x\right\|,\] 
hence $\|s\|\le M$.
\end{proof}

\end{document}